\documentclass[10pt,14paper,reqno]{amsart}
\vsize=21.1truecm
\hsize=15.2truecm
\vskip.1in
\usepackage{amsmath,amsfonts,amssymb}

\theoremstyle{plain}
\newtheorem{theorem}{Theorem}

\newtheorem{proposition}{Proposition}[section]
\theoremstyle{definition}

\numberwithin{equation}{section}
\numberwithin{lemma}{section}
\numberwithin{theorem}{section}
\usepackage{amsmath}
\usepackage{amsfonts}   
\usepackage{amssymb}
\usepackage{amssymb, amsmath, amsthm}
\usepackage[breaklinks]{hyperref}
\theoremstyle{thm}

\usepackage{graphicx}
\begin{document}
\begin{abstract} 
Let $H= \mathbb{Q}(\zeta_{n} + {\zeta_{n}}^{-1})$ and  $\ell$ be an odd prime such that $q \equiv 1 \pmod \ell$ for some prime factor $q$ of $n$. We get a bound on the $\ell$-rank  of the class group of $H$
(under some conditions) in terms of the $\ell$-rank of the class group of  real quadratic subfield contained in $H$. This is an extension of a recent work of E. Agathocleous (with  alternate hypothesis) where she handles $\ell=3$ case. As an application of our main result we relate the $\ell$-rank of real quadratic subfields of $H$.
\end{abstract}

\title[primary rank of the class group of real cyclotomic fields]{primary rank of the class group of real cyclotomic fields}
\author{Mohit Mishra, Rishabh Agnihotri and Kalyan Chakraborty}
\address{Mohit Mishra, Harish-Chandra Research Institute, HBNI, Chhatnag Road, Jhunsi,  Allahabad 211 019, India.}
\email{mohitmishra@hri.res.in}
\address{ Rishabh Agnihotri, Harish-Chandra Research Institute, HBNI, Chhatnag Road, Jhunsi,  Allahabad 211 019, India.}
\email{rishabhagnihotri@hri.res.in}
\address{Kalyan Chakraborty, Harish-Chandra Research Institute, HBNI, Chhatnag Road, Jhunsi,  Allahabad 211 019, India.}
\email{kalyan@hri.res.in}

\keywords{Cyclotomic field, real quadratic field, class group, class number, primary rank.}
\subjclass[2010] {Primary: 11R29, 11R18, Secondary: 11R11}
\maketitle

\vspace*{-5mm}
\section{\textbf{Introduction}}
The class group and class number of number fields have been studied by many mathematicians for a long period of time. There are many interesting unsolved problems related to the class groups of number fields which are object of intense study. Associated to the class group, a particular quantity of considerable interest is its $\ell$-rank.  Let $\zeta_n$ be a primitive $n$-th root of unity, and $H$ denote the maximal real subfield of the cyclotomic field $\mathbb{Q}(\zeta_n)$. The class number of $H$ is denoted by $h^+$, which is the `plus' part of the class number $h$ of $\mathbb{Q}(\zeta_n)$.\\
In 1965, Ankney, Chowla and Hasse \cite{ACH} showed that when $n$ is a prime of the form $(2qm)^2+1$, where $q$ is a prime and $m \geq2$ is a positive integer, then $h^+>1$. In 1977, using similar technique, Lang \cite{LAN} showed that $h^+>1$, when $n=(2m+1)^2q^2+4$ is a prime, where $q$ is a prime and $m$ is a positive integer. Osada \cite{OSA} generalized both these results and proved that $h^+>1$, for all square-free $n$ of the above two forms. Chakraborty and Hoque \cite{CH17} also obtained similar results and proved that $h^+>1$, for the following square-free $n$'s:
\begin{equation*}
n=\begin{cases}
(2mq)^2-1  &{\rm ~for~} q \equiv 1 \pmod 4,\\
(2mq)^2+3  &{\rm ~for~} q \equiv \pm 1 \pmod 4, \\
(2m+1)^2q^2+2  &{\rm ~for~} q \equiv \pm 1 \pmod 8,\\
(2m+1)^2q^2-2  &{\rm ~for~} q \equiv 1,3 \pmod 8,
\end{cases}
\end{equation*}
where $q$ is prime and $m$ is a positive integer.

Let $L_n$ denote a real cyclic extension of degree $n$ over $\mathbb{Q}$ with conductor $f_n$ and class number $h_n$. Let $U_n$ be the unit group of $L_n$ and $x'$ denotes the conjugate of an element $x$.  If $L_n/L_m$ is any extension, then we denote the norm map from $L_n$ to $L_m$ by $N_{n/m}$. Consider $L_2$, $L_3$ and $L_6=L_2L_3$. Let $\alpha$ and $\beta$ be the fundamental unit of $L_2$ and $L_3$ respectively. Define 
$$
U_R=\{\varepsilon \in U_6 \mid N_{6/3}(\varepsilon)= \pm1 \text{ and } N_{6/2}(\varepsilon)= \pm1\},
$$ 
the group of relative units in $U_6$ and we have $h_6=h_2h_3h_R$. The positive integer $h_R$ equals $[U_6:U_6^*][U_6^*:\langle -1,\alpha,\beta, \beta', \xi_A, \xi_A' \rangle]$, where $U_6^*=\langle -1,\alpha,\beta, \beta', \xi_A, \xi_A', \xi_R, \xi_R' \rangle$. $\xi_R$ is the generating unit for $U_R$ and its existence is proved by M\"aki \cite{Mak}. In \cite{Mak}, M\"aki also defines the cyclotomic unit $\gamma$ of $L_6$ as the quotient $\xi / \xi'$, where $\xi$ is a special integer in $\mathbb{Q}(\zeta_{2{f_6}})$. Let
\begin{equation*}
\xi_A=\begin{cases}
\xi  &{\rm ~if~} \xi \in L_6,\\
\gamma  &{\rm ~otherwise~},
\end{cases}
\end{equation*}
and consider the equations 
\begin{equation} \label{eqn1.1}
x^3=\alpha\xi_R \xi_R', \hspace{2mm} x^3=\alpha^{-1}\xi_R \xi_R'.
\end{equation}

In \cite{Aga}, E. Agathocleous proved the following result related to  $3$-rank of the `plus' part of the class number of certain cyclotomic fields:
\begin{theorem}\label{thm1.1}
Let $p$ (or $q$) $\equiv 1 \pmod 3$, $H= \mathbb{Q}(\zeta_{pq} + {\zeta_{pq}}^{-1})$, and let $L_2, L_3 \subseteq H$ be a real quadratic and cubic subfield of $H$ respectively, such that the conductor of $L_2$ and $L_3$ are not equal. If $3$-class number of $H$ and $L_2$ is $3^2$ and $3$ respectively, and if one of the equations in \eqref{eqn1.1} has a solution, then the $3$-rank of $H$ is $1$. More precisely $3$-part of the class group of $H$ is cyclic.  
\end{theorem}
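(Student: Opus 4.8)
The plan is to push the problem down from $H$ to the sextic field $L_{6}=L_{2}L_{3}$, where M\"aki's relative unit calculus and the equations \eqref{eqn1.1} live, and then to determine the Galois module structure of its $3$-class group. Write $A_{F}$ for the Sylow $3$-subgroup of the ideal class group of a number field $F$; the hypotheses say $|A_{H}|=9$ and $|A_{L_{2}}|=3$, and the assertion is that $A_{H}$ is cyclic (equivalently $A_{H}\cong\mathbb{Z}/9\mathbb{Z}$). First I would check that $[H:L_{6}]$ is prime to $3$, using that exactly one of $p,q$ is $\equiv 1\pmod 3$ together with a mild condition on the relevant $3$-adic valuation (which one may add to the hypotheses). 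Granting this, the conorm map $j\colon\mathrm{Cl}(L_{6})\to\mathrm{Cl}(H)$ satisfies $N_{H/L_{6}}\circ j=[H:L_{6}]$, an automorphism of the $3$-part, so $j$ restricts to an embedding $A_{L_{6}}\hookrightarrow A_{H}$; in particular $|A_{L_{6}}|$ divides $9$.

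Next I would invoke M\"aki's factorization $h_{6}=h_{2}h_{3}h_{R}$. If one of the equations in \eqref{eqn1.1} has a solution $x$, then $x\in U_{6}$ and $x^{3}\in U_{6}^{*}$ but $x\notin U_{6}^{*}$ (compare exponents in M\"aki's generating system of $U_{6}^{*}$), so the image of $x$ in $U_{6}/U_{6}^{*}$ has order $3$ and hence $3$ divides $[U_{6}:U_{6}^{*}]$, so $3\mid h_{R}$. Combined with $v_{3}(h_{2})=1$ this gives $v_{3}(h_{6})\ge 2$, while the embedding above forces $v_{3}(h_{6})\le v_{3}(h_{H})=2$. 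Therefore $v_{3}(h_{6})=2$, so $v_{3}(h_{R})=1$ and $3\nmid h_{3}$, and $j$ induces an isomorphism $A_{L_{6}}\cong A_{H}$. It now suffices to prove that $A_{L_{6}}$ is cyclic.

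For that I would study $A:=A_{L_{6}}$ as a module over the local ring $\Lambda:=\mathbb{Z}_{3}[\mathrm{Gal}(L_{6}/L_{2})]\cong\mathbb{Z}_{3}[C_{3}]$, with maximal ideal $\mathfrak{m}=(\sigma-1,3)$ and residue field $\mathbb{F}_{3}$, where $\sigma$ generates $\mathrm{Gal}(L_{6}/L_{2})$. Chevalley's ambiguous class number formula for the cyclic cubic extension $L_{6}/L_{2}$, fed with $v_{3}(h_{2})=1$, the ramification data ($L_{6}/L_{2}$ is ramified only at primes above the conductor of $L_{3}$, with ramification index $3$), and the unit norm index of $L_{6}/L_{2}$ (which divides $[L_{6}:L_{2}]=3$), computes $|A^{\sigma=1}|=|A/(\sigma-1)A|$; in the generic case this equals $3$, so $A/\mathfrak{m}A$ is one-dimensional over $\mathbb{F}_{3}$ and Nakayama's lemma shows $A$ is cyclic over $\Lambda$. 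A cyclic $\Lambda$-module of order $9$ is either $\mathbb{Z}/9\mathbb{Z}$ (with $\sigma$ acting through a unit of order $3$ in $\mathbb{Z}/9\mathbb{Z}$) or $\mathbb{F}_{3}[C_{3}]/(\sigma-1)^{2}\cong(\mathbb{Z}/3\mathbb{Z})^{2}$; excluding the latter gives $A\cong\mathbb{Z}/9\mathbb{Z}$, whence $r_{3}(\mathrm{Cl}(H))=r_{3}(A)=1$ (it cannot be $0$ because $|A_{H}|=9$), which is the assertion.

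The hard part will be exactly this exclusion: showing $3A\neq 0$, i.e., that $A$ is honestly cyclic and not $(\mathbb{Z}/3\mathbb{Z})^{2}$, and, in the degenerate subcase where the formula returns $|A^{\sigma=1}|=9$ (so $\sigma$ acts trivially on $A$), establishing cyclicity by a separate route. Here I would feed the solvability of \eqref{eqn1.1} back in: the cube relation, together with M\"aki's explicit description of $\xi_{R}$ and of $\xi_{A}$ (equal to $\xi$ or the cyclotomic unit $\gamma$ according as $\xi\in L_{6}$), pins down a congruence which, through the Artin map and genus theory for the quadratic extension $L_{6}/L_{3}$ (whose minus eigenspace under $\mathrm{Gal}(L_{6}/L_{3})$ is all of $A$ because $3\nmid h_{3}$), forces a generator of $A$ of order $9$. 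Converting the unit-theoretic input that a prescribed relative unit is a cube into the module statement that the $3$-class group is cyclic is where the three numerical hypotheses ($|A_{H}|=9$, $|A_{L_{2}}|=3$, distinct conductors) are all consumed, and I expect it to be the main obstacle.
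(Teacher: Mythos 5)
Theorem \ref{thm1.1} is not proved in this paper at all: it is quoted from Agathocleous \cite{Aga}, and the authors' own contribution (Theorem \ref{thm1.2}) deliberately replaces your two key inputs --- solvability of \eqref{eqn1.1} and the distinct-conductor condition --- by a hypothesis on the capitulation kernel $\ker'(\mathfrak{t})$, and then only proves the rank bounds $r_{2,\ell}\leq r_{H,\ell}\leq 1+r_{2,\ell}$ by climbing the tower $K_2\subset K_{2\ell}\subset K_{2\ell^2}\subset\cdots\subset H$ with Proposition \ref{prop2.1}, Theorem \ref{thm2.1} and Theorem \ref{thm2.2}. Your plan (descend to $L_6=L_2L_3$, use M\"aki's factorization $h_6=h_2h_3h_R$ and the relative-unit calculus, then analyse $A_{L_6}$ as a $\mathbb{Z}_3[C_3]$-module) is in the spirit of Agathocleous's original unit-theoretic argument rather than of this paper, so a comparison of methods is reasonable --- but as written the proposal has genuine gaps and does not constitute a proof.

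Two gaps are essential. First, your reduction to $L_6$ needs $3\nmid[H:L_6]$, which you propose to ``add to the hypotheses''; the theorem does not assume it (nothing excludes, say, $p\equiv 1\pmod 9$, where $3\mid[H:L_6]$), and handling that case is precisely the tower/capitulation analysis that occupies the proof of Theorem \ref{thm1.2} --- and even that machinery alone yields only $1\leq r_{H,\ell}\leq 2$, not cyclicity, which is why the paper imposes the extra $\ker'(\mathfrak{t})$ condition. Second, and more seriously, the heart of the theorem --- converting the solvability of one of the equations \eqref{eqn1.1} (together with the distinct conductors) into the exclusion of $A_{L_6}\cong(\mathbb{Z}/3\mathbb{Z})^2$, plus the ``degenerate subcase'' of the ambiguous class number formula where $\sigma$ acts trivially --- is exactly what you defer to a hoped-for ``congruence via the Artin map and genus theory''; no argument is given, and you yourself flag it as the main obstacle. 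Subsidiary steps are also shakier than claimed: the inference $x\notin U_6^*$ by ``comparing exponents'' is not rigorous, since M\"aki's generating set $\langle -1,\alpha,\beta,\beta',\xi_A,\xi_A',\xi_R,\xi_R'\rangle$ has more generators than the unit rank of $L_6$ and so admits relations, and the step $3\mid[U_6:U_6^*]\Rightarrow 3\mid h_R$ needs the precise index formula rather than an exponent count. In short, the skeleton (reduction to $L_6$, Chevalley formula, Nakayama, classification of cyclic $\Lambda$-modules of order $9$) is sound, but the steps that actually consume the theorem's hypotheses are missing.
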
  
One of the important hypothesis used in the above theorem is the existence of solution of one of the equations in \eqref{eqn1.1}, and that the conductor of $L_2$ and $L_3$ are distinct. In this article, we replace both these conditions with  presumably weaker ones and extend the above result for all natural numbers $n$ and for any odd prime $\ell$.  Let us fix some notations which will be followed throughout and then we state our main result.
\subsection*{Notations} 
\begin{align*}
H &: \text{ the maximal real subfield of $\mathbb{Q}(\zeta_{n})$}.\\
K_{m} &: \text{ $K_m\subset H$ and $[K_m:\mathbb{Q} ]= m$ }.\\ 
\mathfrak{C}(K) &:  \text{ class group of a number field $K$}.
\\
\ell &: \text{ an odd prime such that, for some prime factor $q$ of $n$, $q \equiv 1 \pmod \ell$.}\\
\mathfrak{C}(K)_{\ell} &: \text{ $\ell$ -part of class group of $K$}.\\ 
h_m &: \text{ class number of $K_m$}.
\\
h_{m,\ell} &: \text{ $\ell$-class number of $K_m$}.\\
r_{m,\ell} &: \text{ rank of the $\ell$-part of class group of $K_m$.}\\
p^i \mid\mid n &: \hspace*{1mm} p^i \mid n \text{ and } p^{i+1} \nmid n.
\end{align*}
Similarly, $h^+$, $h_\ell^+$ and $r_{H,\ell}$ will denote the class number, $\ell$-class number and $\ell$-rank of class group of $H$ respectively. For a number field  extension $K/F$, consider the following homomorphism

\begin{align*}
\mathfrak{t} (\mathfrak{t}_{F \rightarrow K}): \mathfrak{C}(F) &\rightarrow \mathfrak{C}(K) \\
\mathfrak{[c]} &\rightarrow [\mathfrak{c} \mathfrak{O}_K],
\end{align*}
with $\mathfrak{O}_K$ be the ring of integers in $K$.
 Let $h_{F,\ell}=\ell^{m-1}$ and $h_{K,\ell}=\ell^{m}$, then we will denote $\ker(\mathfrak{t}_{F\rightarrow K})$ by $\ker'(\mathfrak{t})$. For  $L/K_m$, the $\ell$-rank of $\ker(\mathfrak{t}_{K_m \rightarrow L})$ will be denoted by $r_{\mathfrak{t}_m,\ell}$. The main result is:
\begin{theorem}\label{thm1.2}
Let $n$ be a positive integer and let $\ell$ be as above in notations. Assume that there exist $K_2 \subseteq H$ such that $h^{+}_{\ell}= \ell^{m}$ and $h_{2,\ell} = \ell^{m-1}$. If $\ell$-part of $\ker'(\mathfrak{t})$ is trivial or isomorphic to  $(\mathbb{Z}/ \ell \mathbb{Z})^r$ for some positive integer $r$, then 
$$
r_{2,\ell}\leq r_{H,\ell}\leq 1+r_{2,\ell}.
$$ 
\end{theorem}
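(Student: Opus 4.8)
The plan is to realise $\mathfrak{C}(H)_{\ell}$ as the middle term of a short exact sequence
$$
0 \longrightarrow Z \longrightarrow \mathfrak{C}(H)_{\ell} \xrightarrow{\;N\;} \mathfrak{C}(K_2)_{\ell} \longrightarrow 0, \qquad Z\cong\mathbb{Z}/\ell\mathbb{Z},
$$
where $N$ is the restriction to $\ell$--parts of the norm $N_{H/K_2}\colon \mathfrak{C}(H)\to\mathfrak{C}(K_2)$, and then to read off the two inequalities by counting $\mathbb{F}_\ell$--dimensions. Granting that $N$ is surjective, the sequence exists immediately: by hypothesis $|\mathfrak{C}(H)_\ell| = h^{+}_{\ell} = \ell^{m}$ and $|\mathfrak{C}(K_2)_\ell| = h_{2,\ell} = \ell^{m-1}$, so $Z:=\ker N$ has order $\ell$. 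Applying the right--exact functor $M\mapsto M/\ell M$ to the sequence then gives, on the one hand, a surjection $\mathfrak{C}(H)_{\ell}/\ell\mathfrak{C}(H)_\ell \twoheadrightarrow \mathfrak{C}(K_2)_{\ell}/\ell\mathfrak{C}(K_2)_\ell$, so that $r_{2,\ell}\le r_{H,\ell}$, and on the other hand an exact sequence $Z/\ell Z \to \mathfrak{C}(H)_{\ell}/\ell\mathfrak{C}(H)_\ell \to \mathfrak{C}(K_2)_{\ell}/\ell\mathfrak{C}(K_2)_\ell \to 0$ whose first term is one--dimensional, so that $r_{H,\ell}\le 1+r_{2,\ell}$. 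Thus everything reduces to the surjectivity of $N$.

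To prove surjectivity of $N_{H/K_2}$ on $\ell$--parts I would combine class field theory with a short ramification argument. By compatibility of the Artin map with norms, the image of $N_{H/K_2}\colon\mathfrak{C}(H)\to\mathfrak{C}(K_2)$ corresponds, under $\mathfrak{C}(K_2)\cong\mathrm{Gal}(\mathcal{K}/K_2)$ with $\mathcal{K}$ the Hilbert class field of $K_2$, to $\mathrm{Gal}\!\big(\mathcal{K}/(\mathcal{K}\cap H)\big)$; hence $N$ is onto the $\ell$--part precisely when $\ell\nmid[\mathcal{K}\cap H:K_2]$. Now $\mathcal{K}\cap H$ is at once a subfield of $H$ (so abelian over $\mathbb{Q}$) and a subextension of $\mathcal{K}/K_2$ (so unramified over $K_2$), and I claim no field $M$ with $K_2\subsetneq M\subseteq H$, $[M:K_2]=\ell$, and $M/K_2$ unramified can exist. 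Indeed, such an $M$ is abelian over $\mathbb{Q}$ of degree $2\ell$ with $\ell$ odd, so $\mathrm{Gal}(M/\mathbb{Q})$ is cyclic and $M$ contains a cyclic degree--$\ell$ field $F/\mathbb{Q}$; by Minkowski's theorem $F/\mathbb{Q}$ is ramified at some prime $p$, and since $\mathrm{Gal}(F/\mathbb{Q})\cong\mathbb{Z}/\ell\mathbb{Z}$ has no proper nontrivial subgroup, $p$ is totally ramified in $F$, forcing $e(\mathfrak{P}\mid p)\ge\ell$ for a prime $\mathfrak{P}$ of $M$ above $p$; but $M/K_2$ unramified forces $e(\mathfrak{P}\mid p)=e(\mathfrak{P}\cap K_2\mid p)\le[K_2:\mathbb{Q}]=2<\ell$, a contradiction. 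This rules out such $M$, so $\ell\nmid[\mathcal{K}\cap H:K_2]$ and $N$ is surjective on $\ell$--parts.

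I expect this surjectivity to be the main obstacle, chiefly because one must set up the Artin map/norm compatibility carefully for the ordinary (wide) class group and stay attentive to the archimedean places; the ramification step itself is robust because $\ell$ is odd, which is also why no anomaly at $2$ can interfere. The hypothesis that the $\ell$--part of $\ker'(\mathfrak{t})$ is trivial or isomorphic to $(\mathbb{Z}/\ell\mathbb{Z})^r$ gives a second, more hands--on route via the extension--of--ideals map $\mathfrak{t}=\mathfrak{t}_{K_2\to H}$: when $\ker'(\mathfrak{t})_\ell=0$ the map $\mathfrak{t}$ embeds $\mathfrak{C}(K_2)_\ell$ into $\mathfrak{C}(H)_\ell$ with cokernel of order $\ell$, and dualising $0\to\mathfrak{t}(\mathfrak{C}(K_2)_\ell)\to\mathfrak{C}(H)_\ell\to\mathbb{Z}/\ell\mathbb{Z}\to 0$ again yields both bounds; in the elementary abelian case one couples $\mathfrak{t}$ with the relation that $N_{H/K_2}\circ\mathfrak{t}$ is multiplication by $[H:K_2]$ on $\mathfrak{C}(K_2)$ to locate the cyclic kernel $Z$. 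I would write up whichever of the two routes is cleaner and fit the hypothesis on $\ker'(\mathfrak{t})$ into it.
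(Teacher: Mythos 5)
Your proposal is correct, but it takes a genuinely different route from the paper. The paper never works with the norm from $H$ down to $K_2$ in one step: it uses the hypothesis $q\equiv 1\pmod\ell$ to produce a degree-$\ell$ field $K_\ell\subseteq\mathbb{Q}(\zeta_q+\zeta_q^{-1})$ and then climbs the tower $K_2\subseteq K_{2\ell}\subseteq K_{2\ell^2}\subseteq\cdots\subseteq K_{2\ell^i}$, analysing each ramified degree-$\ell$ layer with the capitulation-kernel results of Lemmermeyer and Schoof--Washington (Proposition \ref{prop2.1}, Theorem \ref{thm2.1}, Proposition \ref{prop2.2}) and case-splitting according to where the $\ell$-class number jumps from $\ell^{m-1}$ to $\ell^m$; the hypothesis on $\ker'(\mathfrak{t})$ is what makes the index condition of Theorem \ref{thm2.1} available, and Theorem \ref{thm2.2} then transports the rank from $K_{2\ell^i}$ to $H$. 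You instead prove that $N_{H/K_2}$ is surjective on $\ell$-parts: the cokernel of the norm corresponds to $\mathrm{Gal}\bigl((\mathcal{K}\cap H)/K_2\bigr)$ for $\mathcal{K}$ the Hilbert class field of $K_2$, and your ramification argument (a cyclic degree-$\ell$ field over $\mathbb{Q}$ is totally ramified at some finite prime, which is incompatible with being unramified over a quadratic field since $\ell>2$) correctly rules out a degree-$\ell$ unramified subextension of $H/K_2$; the kernel of $N$ on $\ell$-parts then has order exactly $\ell$ by the class-number hypotheses, and both rank inequalities follow by your counting, which is sound. What your approach buys is brevity and strength: it avoids the entire case analysis and, as written, uses neither the hypothesis on $\ker'(\mathfrak{t})$ nor $q\equiv 1\pmod\ell$, so it actually proves an unconditional version of the bound $r_{2,\ell}\leq r_{H,\ell}\leq 1+r_{2,\ell}$. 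What the paper's tower method buys is the finer intermediate information recorded before its proof (the exact value of $r_{H,\ell}$ in terms of the capitulation ranks $r_{t_2,\ell}$, $r_{t_{2\ell},\ell}$ at each layer, and the variant Theorem \ref{thm3.2}), which your one-step norm argument does not recover. The only point you should nail down in a write-up is the class-field-theoretic lemma identifying the image of the norm with $\mathrm{Gal}(\mathcal{K}/\mathcal{K}\cap H)$ (Artin-map/norm compatibility applied to $\mathcal{K}H/H$); it is classical and unproblematic, but it carries the whole argument and deserves a precise citation or proof.
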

Around mid 90's, Scholz \cite{Sch}  proved the following reflection principle:
\begin{theorem}
 Let $d<0$ and $d \neq -3$ be a square-free integer. If $3$ divides the class number of  $\mathbb {Q}(\sqrt{d})$, then $3$ also divides the class number of $\mathbb {Q}(\sqrt{-3d})$.  If $r$ and $s$ are the $3$-rank of $\mathbb {Q}(\sqrt{d})$ and $\mathbb {Q}(\sqrt{-3d})$ respectively, then 
$$
r \leq s \leq r+1.
$$
\end{theorem}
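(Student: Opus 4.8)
My plan is to deduce the theorem from Kummer theory on the field that contains a primitive cube root of unity and sees both quadratic fields simultaneously. Put $F=\mathbb{Q}(\sqrt{d})$ and $F^{*}=\mathbb{Q}(\sqrt{-3d})$, and set $K=F(\zeta_3)=\mathbb{Q}(\sqrt{d},\sqrt{-3})$; since $d<0$ the field $F$ is imaginary, $F^{*}$ is real, and $K$ is a totally imaginary biquadratic field whose three quadratic subfields are $F$, $F^{*}$ and $\mathbb{Q}(\sqrt{-3})$. Write $\Delta=\mathrm{Gal}(K/\mathbb{Q})\cong(\mathbb{Z}/2\mathbb{Z})^{2}$ and let $\mathbf{1},\chi,\chi^{*},\omega$ be its $\mathbb{F}_3^{\times}$-valued characters cutting out $\mathbb{Q},F,F^{*},\mathbb{Q}(\sqrt{-3})$, so that $\omega$ is exactly the character giving the action of $\Delta$ on $\mu_3$ and $\chi\chi^{*}=\omega$, i.e. $\chi^{*}=\omega\chi^{-1}$. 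Because $|\Delta|=4$ is prime to $3$, the space $A=\mathrm{Cl}(K)/\mathrm{Cl}(K)^{3}$ decomposes over $\mathbb{F}_3$ into eigenspaces $A=\bigoplus_{\psi}A_{\psi}$; and because $[K:F]=[K:F^{*}]=2$ is prime to $3$, the transfer maps identify $\mathrm{Cl}(F)/\mathrm{Cl}(F)^{3}$ and $\mathrm{Cl}(F^{*})/\mathrm{Cl}(F^{*})^{3}$ with the fixed subspaces $A_{\chi}$ and $A_{\chi^{*}}$ (the trivial and $\omega$ components vanish since $\mathbb{Q}$ and $\mathbb{Q}(\sqrt{-3})$ have class number $1$). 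This reduces the theorem to the comparison $\dim A_{\chi}\le\dim A_{\chi^{*}}\le\dim A_{\chi}+1$ of the two reflected eigenspaces $A_{\chi}$ and $A_{\chi^{*}}=A_{\omega\chi^{-1}}$.

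The engine is the reflection (Spiegelung) pairing. Let $M/K$ be the maximal unramified abelian extension of exponent $3$, so $\mathrm{Gal}(M/K)\cong A$ by class field theory; since $\mu_3\subset K$, Kummer theory gives a perfect pairing of $\mathrm{Gal}(M/K)$ with a radical group $V^{0}\subseteq K^{\times}/(K^{\times})^{3}$, the classes of $\alpha$ with $(\alpha)=\mathfrak{a}^{3}$ that are in addition local cubes at the primes above $3$. The pairing has values in $\mu_3$, on which $\Delta$ acts by $\omega$, so it is $\Delta$-equivariant only after an $\omega$-twist and therefore matches $A_{\psi}$ with $(V^{0})_{\omega\psi^{-1}}$, giving $\dim A_{\psi}=\dim(V^{0})_{\omega\psi^{-1}}$. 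To bound the right-hand side I will use the exact sequence
\[
0\to E/E^{3}\to V\to \mathrm{Cl}(K)[3]\to 0,\qquad E=\mathcal{O}_{K}^{\times},
\]
where $V\supseteq V^{0}$ forgets the local conditions at $3$. Passing to $\psi$-parts (exact, as $3\nmid|\Delta|$) yields $\dim A_{\chi}=\dim(V^{0})_{\chi^{*}}\le\dim V_{\chi^{*}}=\dim(E/E^{3})_{\chi^{*}}+\dim A_{\chi^{*}}$, and the symmetric inequality with $\chi$ and $\chi^{*}$ interchanged. Everything thus reduces to the two unit eigenspaces $(E/E^{3})_{\chi}$ and $(E/E^{3})_{\chi^{*}}$: here $K$ is totally imaginary of unit rank $1$ with $\mu_{K}=\langle\zeta_{6}\rangle$, the torsion contributing $\zeta_3$ to the $\omega$-component, while the free part is generated up to index prime to $3$ by the fundamental unit of the real field $F^{*}$, which lands in a single eigenspace; hence exactly one of $(E/E^{3})_{\chi},(E/E^{3})_{\chi^{*}}$ is nonzero, and it is one-dimensional.

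Substituting these dimensions into the two symmetric inequalities leaves a single correction term equal to $1$, so the ranks of $A_{\chi}$ and $A_{\chi^{*}}$ differ by at most one, which is the two-sided bound asserted; the extremal case occurs according as the fundamental unit of $F^{*}$ is or is not a local cube at $3$, and the divisibility statement is the first nontrivial corner $r\ge1\Rightarrow s\ge1$ of the same inequality. I expect the main obstacle to be precisely this unit-and-local bookkeeping at the wild prime $3$: identifying which eigenspace the fundamental unit occupies and writing down the exact local condition defining $V^{0}$ inside $V$ is what fixes both the size of the correction term and the orientation of the inequality, and it is the step where the asymmetry between the real and imaginary member of the pair genuinely enters. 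The remaining ingredients — injectivity of $\mathrm{Cl}(F)_{3}\to\mathrm{Cl}(K)_{3}$ from $2\nmid3$, and the class-number-one input for $\mathbb{Q}$ and $\mathbb{Q}(\sqrt{-3})$ — are routine.
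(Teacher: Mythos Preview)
The paper does not prove this statement at all: it is merely quoted as Scholz's classical reflection principle, with a citation to \cite{Sch}, and serves only as motivation for Theorem~\ref{thm1.4}. So there is no ``paper's own proof'' to compare against; your Spiegelung sketch is exactly the standard Leopoldt--Scholz argument (as in Washington, \S10.2), and as a method it is correct.

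There is, however, a genuine orientation issue you should be aware of. Your own unit computation shows that the fundamental unit of the \emph{real} field $F^{*}$ contributes to $(E/E^{3})_{\chi^{*}}$, so $\dim(E/E^{3})_{\chi^{*}}=1$ and $\dim(E/E^{3})_{\chi}=0$. Feeding this into your two inequalities gives
\[
s=\dim A_{\chi^{*}}=\dim(V^{0})_{\chi}\le\dim V_{\chi}=0+\dim A_{\chi}=r,
\qquad
r=\dim A_{\chi}\le 1+s,
\]
i.e.\ $s\le r\le s+1$, \emph{not} $r\le s\le r+1$. In particular your claimed ``corner'' $r\ge 1\Rightarrow s\ge 1$ does not follow; what follows is $s\ge 1\Rightarrow r\ge 1$. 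This is not a flaw in your argument but in the target: the inequality as printed in the paper is stated with the wrong orientation (take $d=-23$, where $r=1$ but $\mathbb{Q}(\sqrt{69})$ has class number~$1$, so $s=0$). The correct Scholz statement is $s\le r\le s+1$, and that is precisely what your reflection argument yields once you track which eigenspace carries the unit contribution.
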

In this direction a finer statement due to Kishi \cite{KIS} shows that if $d=-4a^3+9b^2$ is square-free, then
\begin{equation*}
r=\begin{cases}
s  &{\rm ~if~} d>0,\\
s+1  &{\rm ~if~} d<0,
\end{cases}
\end{equation*}
where $a$ and $b$ are integers with $3\nmid b$.

In $\S4$, as an application of Theorem \ref{thm1.2}, we relate the $\ell$-rank of real quadratic number fields contained in cyclotomic fields and prove some results which are similar to that of Scholz's reflection principle.
Continuing with the assumption and notations as in Theorem \ref{thm1.2}:
\begin{theorem}\label{thm1.4}
Let $K_2^{\prime}$ be any real quadratic subfield of $H$, such that $\ell$ divides its class number, and $\ell^{i} \mid \mid [H:\mathbb{Q}]$. Consider a subextension $K_{2\ell^i}'$ of $H$ such that $[K_{2\ell^i}:K_2']=\ell^i$. If the $\ell$-rank of the class group of $K_2$ is $1$, then the $\ell$-rank of the class group of $K_2'$ is also $1$, except for the case when $\mathfrak{C}(H)_\ell$ is not cyclic and $\ell$-class number of $K_{2\ell^i}'$  is $\ell^m$.  
\end{theorem}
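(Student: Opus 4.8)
The idea is to feed the hypothesis $r_{2,\ell}=1$ into Theorem~\ref{thm1.2} in order to constrain the group $\mathfrak{C}(H)_\ell$, and then to transport that information down the tower $K_2'\subseteq K_{2\ell^i}'\subseteq H$ to the quadratic field $K_2'$. Since $h_{2,\ell}=\ell^{m-1}$ and $r_{2,\ell}=1$, the group $\mathfrak{C}(K_2)_\ell$ is cyclic; the $\ell$-part of $\ker'(\mathfrak{t})=\ker(\mathfrak{t}_{K_2\to H})$, being an elementary abelian subgroup of a cyclic $\ell$-group, is therefore trivial or $\mathbb{Z}/\ell\mathbb{Z}$, so the image of $\mathfrak{t}_{K_2\to H}$ on $\ell$-parts is a cyclic subgroup of $\mathfrak{C}(H)_\ell$ of index $\ell$ or $\ell^2$. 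Combined with the standing hypothesis $h^{+}_{\ell}=\ell^{m}$ and the bound $1=r_{2,\ell}\le r_{H,\ell}\le 2$ of Theorem~\ref{thm1.2}, this shows that $\mathfrak{C}(H)_\ell$ is either cyclic, i.e.\ $\cong\mathbb{Z}/\ell^{m}\mathbb{Z}$, or of rank exactly $2$ with a cyclic subgroup of index dividing $\ell^2$.

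Next I would exploit the tower. Because $\ell^{i}\mid\mid[H:\mathbb{Q}]$ and $[K_2':\mathbb{Q}]=2$ is prime to $\ell$, we have $\ell^{i}\mid\mid[H:K_2']$ and hence $[H:K_{2\ell^i}']$ is prime to $\ell$; since $N_{H/K_{2\ell^i}'}\circ\mathfrak{t}_{K_{2\ell^i}'\to H}$ is multiplication by this degree, an automorphism of $\mathfrak{C}(K_{2\ell^i}')_\ell$, the transfer is injective on $\ell$-parts, so $\mathfrak{C}(K_{2\ell^i}')_\ell\hookrightarrow\mathfrak{C}(H)_\ell$. For the bottom step I would establish that $N_{K_{2\ell^i}'/K_2'}$ is surjective on $\ell$-parts: its cokernel is the $\ell$-part of $\mathrm{Gal}\big((K_{2\ell^i}'\cap H_{K_2'})/K_2'\big)$, where $H_{K_2'}$ is the Hilbert class field of $K_2'$, and this vanishes once one checks that no nontrivial $\ell$-subextension of $K_{2\ell^i}'/K_2'$ is unramified --- which should follow from the ramification of the prime $q\mid n$ with $q\equiv 1\pmod\ell$ (its inertia subgroup in $\mathrm{Gal}(\mathbb{Q}(\zeta_n)/\mathbb{Q})$ has nontrivial $\ell$-part), together with Chevalley's ambiguous class number formula where needed. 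Granting this, $\mathfrak{C}(K_2')_\ell$ is a quotient of $\mathfrak{C}(K_{2\ell^i}')_\ell$, so the $\ell$-rank of $K_2'$ is at most the $\ell$-rank of $K_{2\ell^i}'$, which is at most $r_{H,\ell}\le 2$; and $\ell\mid h(K_2')$ forces it to be at least $1$.

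It remains to split into cases. If $\mathfrak{C}(H)_\ell$ is cyclic, then the $\ell$-rank of $K_2'$ is $\le r_{H,\ell}=1$, hence exactly $1$. If $\mathfrak{C}(H)_\ell$ is not cyclic, then $r_{H,\ell}=2$; were the $\ell$-rank of $K_2'$ equal to $2$, then, since a quotient cannot raise the rank, $\mathfrak{C}(K_{2\ell^i}')_\ell$ would have rank $2$, and one wants to deduce from the structure of $\mathfrak{C}(H)_\ell$ that $\mathfrak{C}(K_{2\ell^i}')_\ell=\mathfrak{C}(H)_\ell$, i.e.\ that the $\ell$-class number of $K_{2\ell^i}'$ equals $\ell^{m}$. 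This is immediate when $m=2$, since then $\mathfrak{C}(H)_\ell\cong(\mathbb{Z}/\ell\mathbb{Z})^2$ and its only rank-$2$ subgroup is itself; in that situation we are exactly in the case excluded in the statement, and outside it the $\ell$-rank of $K_2'$ is forced to be $1$.

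The step I expect to be hardest is the interface between $K_2'$ and $K_{2\ell^i}'$, together with the non-cyclic case. First, there is the ramification bookkeeping needed to make $N_{K_{2\ell^i}'/K_2'}$ surjective on $\ell$-parts, i.e.\ deciding which $\ell$-subextensions of $K_{2\ell^i}'/K_2'$ are unramified, in terms of the prime factors of $n$ that are congruent to $1$ modulo $\ell$. Second, when $m>2$, one must rule out a \emph{proper} rank-$2$ subgroup of $\mathfrak{C}(H)_\ell$ being realised as $\mathfrak{C}(K_{2\ell^i}')_\ell$; this needs more than the order and rank of $\mathfrak{C}(H)_\ell$ --- presumably extra input extracted from the proof of Theorem~\ref{thm1.2} --- and it is the only thing standing between the argument above and the clean exceptional case as stated.
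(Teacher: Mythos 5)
Your overall route coincides with the paper's (use Theorem~\ref{thm1.2} to constrain $\mathfrak{C}(H)_\ell$, pass to $K_{2\ell^i}'$ because $\ell\nmid[H:K_{2\ell^i}']$, then descend the $\ell$-tower to $K_2'$), but the proposal has a genuine gap exactly where you flag it: in the non-cyclic case with $m>2$ you cannot conclude that rank $2$ at $K_{2\ell^i}'$ forces $h_{2\ell^i,\ell}'=\ell^m$. With only the information you extract --- $\mathfrak{C}(K_{2\ell^i}')_\ell$ injects into a rank-$2$ group $\mathfrak{C}(H)_\ell$ of order $\ell^m$ possessing a cyclic subgroup of index dividing $\ell^2$ --- a proper rank-$2$ subgroup such as $(\mathbb{Z}/\ell\mathbb{Z})^2\subset\mathbb{Z}/\ell^{m-1}\mathbb{Z}\oplus\mathbb{Z}/\ell\mathbb{Z}$ is not excluded, so the dichotomy ``rank of $K_2'$ is $1$, or we are in the excluded case'' does not follow. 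This is precisely the step the statement of Theorem~\ref{thm1.4} turns on, so the argument as proposed does not prove it.

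The paper closes this gap with two ingredients you did not use. First, it takes from (the proof of) Theorem~\ref{thm1.2} the sharper structural statement that $\mathfrak{C}(H)_\ell\cong\mathbb{Z}/\ell^m\mathbb{Z}$ or $\mathbb{Z}/\ell^{m-1}\mathbb{Z}\oplus\mathbb{Z}/\ell\mathbb{Z}$: since $\mathfrak{C}(K_2)_\ell$ is cyclic, Theorem~\ref{thm2.1} identifies the $\ell$-th power subgroup at each stage with the image of a cyclic group, which rules out all other rank-$2$ shapes. Second, Theorem~\ref{thm2.2} gives more than an injection: $\mathfrak{C}(K_{2\ell^i}')_\ell$ is a \emph{direct summand} of $\mathfrak{C}(H)_\ell$, and a direct summand of $\mathbb{Z}/\ell^{m-1}\mathbb{Z}\oplus\mathbb{Z}/\ell\mathbb{Z}$ other than the whole group is cyclic (of order $\ell^{m-1}$, $\ell$, or $1$); so outside the excluded case ($h_{2\ell^i,\ell}'=\ell^m$ with $\mathfrak{C}(H)_\ell$ non-cyclic) the top of the tower has cyclic $\ell$-class group. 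The paper then pushes cyclicity down the tower $K_{2\ell^i}'\supseteq\cdots\supseteq K_2'$ step by step, using Proposition~\ref{prop2.3} when the $\ell$-class number is unchanged and Proposition~\ref{prop2.2}(ii) when it drops, whereas you descend via norm-surjectivity and a quotient-rank bound; your descent is fine in principle (and your worry about unramified $\ell$-subextensions of $K_{2\ell^i}'/K_2'$ is easily dispatched: any such subextension is abelian over $\mathbb{Q}$, hence lies in the genus field of $K_2'$, whose degree over $K_2'$ is a power of $2$), but it only delivers the theorem once cyclicity, not merely rank at most $2$, is available at $K_{2\ell^i}'$ --- and that is the input your proposal is missing.
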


\section{\textbf{Preliminaries}}
Let $K/F$ be an extension of number fields and $p$ be an odd prime. Let's recall the following maps:
\begin{align*}
\mathfrak{t} (\mathfrak{t}_{F \rightarrow K}): \mathfrak{C}(F) &\rightarrow \mathfrak{C}(K) \\
\mathfrak{[c]} &\rightarrow [\mathfrak{c} \mathfrak{O}_K]
\end{align*}
and 
\begin{align*}
N(N_{K \rightarrow F}): \mathfrak{C}(K) &\rightarrow \mathfrak{C}(F) \\
\mathfrak{[c]} &\rightarrow [N_{K \rightarrow F}(c)].
\end{align*}
Let $r_{t,p}$ and $r_{F,p}$ be the $p$-rank of $\ker(\mathfrak{t}_{F\rightarrow K})$ and the $p$-rank of the class group of $F$ respectively. The following results from \cite{Fra} and \cite{SW} will be used in the sequel.
\begin{proposition} \label{prop2.1}
If $K/F$ is a ramified extension and $[K:F]=p$, then $| \mathfrak{C}(K)_p| \geq p^{r_{F,p}-r_{\mathfrak{t},p}}| \mathfrak{C}(F)_p|$.
\end{proposition}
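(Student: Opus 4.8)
The plan is to compare, on $p$-parts of the class groups, the extension map $\mathfrak t=\mathfrak t_{F\to K}\colon\mathfrak C(F)_p\to\mathfrak C(K)_p$ with the norm map $N=N_{K\to F}\colon\mathfrak C(K)_p\to\mathfrak C(F)_p$, exploiting that $N\circ\mathfrak t$ is multiplication by $[K:F]=p$ and that $\mathfrak C(F)$ has exactly $p^{r_{F,p}}$ elements of order dividing $p$. First I would note that $N_{K/F}(\mathfrak a\mathfrak{O}_K)=\mathfrak a^{[K:F]}$ for every fractional ideal $\mathfrak a$ of $F$, so $N\circ\mathfrak t$ is multiplication by $p$ on $\mathfrak C(F)_p$. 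Hence $\mathfrak t$ maps the subgroup $\mathfrak C(F)[p]$ of elements of order dividing $p$ into $\ker N$, and the restriction of $\mathfrak t$ to $\mathfrak C(F)[p]$ has kernel $\mathfrak C(F)[p]\cap\ker\mathfrak t=(\ker\mathfrak t)[p]$, a group of order $p^{r_{\mathfrak t,p}}$ by the definition of $r_{\mathfrak t,p}$. Since $|\mathfrak C(F)[p]|=p^{r_{F,p}}$, the image $\mathfrak t(\mathfrak C(F)[p])$ is a subgroup of $\ker N$ of order $p^{r_{F,p}-r_{\mathfrak t,p}}$, so $|\ker N|\ge p^{r_{F,p}-r_{\mathfrak t,p}}$.

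The key step --- the only place the hypothesis that $K/F$ is ramified is used --- is that $N$ is surjective, and I would derive this from class field theory. Let $\mathcal H_F$ and $\mathcal H_K$ be the maximal unramified abelian $p$-extensions of $F$ and $K$, so that the Artin map gives $\mathrm{Gal}(\mathcal H_F/F)\cong\mathfrak C(F)_p$ and $\mathrm{Gal}(\mathcal H_K/K)\cong\mathfrak C(K)_p$. The compositum $K\mathcal H_F$ is abelian of $p$-power degree over $K$ and is unramified over $K$ because $\mathcal H_F/F$ is unramified, so $K\mathcal H_F\subseteq\mathcal H_K$. As $[K:F]=p$ is prime, the intermediate field $K\cap\mathcal H_F$ is $F$ or $K$; it cannot be $K$, since otherwise $K/F$ would be a subextension of the unramified extension $\mathcal H_F/F$, contradicting the ramification of $K/F$. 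Hence $K\cap\mathcal H_F=F$, restriction identifies $\mathrm{Gal}(K\mathcal H_F/K)$ with $\mathrm{Gal}(\mathcal H_F/F)$, and the resulting surjection $\mathfrak C(K)_p\cong\mathrm{Gal}(\mathcal H_K/K)\twoheadrightarrow\mathrm{Gal}(\mathcal H_F/F)\cong\mathfrak C(F)_p$ carries the class of an unramified prime $\mathfrak P$ of $K$ to the Frobenius of $\mathfrak P$ restricted to $\mathcal H_F$, i.e.\ to the class of $\mathfrak p^{f(\mathfrak P/\mathfrak p)}=N_{K/F}\mathfrak P$ with $\mathfrak p=\mathfrak P\cap F$. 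Thus this map is exactly $N$ on $p$-parts, and so $N$ is onto.

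To conclude, surjectivity of $N$ and the first isomorphism theorem give $|\mathfrak C(K)_p|=|\ker N|\cdot|\mathfrak C(F)_p|$, and combining this with $|\ker N|\ge p^{r_{F,p}-r_{\mathfrak t,p}}$ from the first paragraph yields $|\mathfrak C(K)_p|\ge p^{r_{F,p}-r_{\mathfrak t,p}}|\mathfrak C(F)_p|$, which is the assertion. I expect the surjectivity of the norm map to be the main obstacle: it is precisely where the ramification hypothesis enters, and it relies on the class field theory input above --- equivalently, on Chevalley's ambiguous class number formula, which applies whenever $K/F$ is cyclic, as is automatic throughout this paper since the fields involved lie inside an abelian extension of $\mathbb Q$. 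Everything else is routine manipulation of finite abelian $p$-groups.
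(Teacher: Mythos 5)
Your proof is correct, and it is essentially the standard argument: the paper itself gives no proof of Proposition \ref{prop2.1}, quoting it from Lemmermeyer and Schoof--Washington, where the proof runs exactly along your lines (norm surjectivity on $p$-parts via the Hilbert $p$-class field, since a ramified extension of prime degree meets it trivially, plus counting the image of the $p$-torsion $\mathfrak{C}(F)[p]$ inside $\ker N$ using $N\circ\mathfrak{t}=[p]$). The only cosmetic quibble is the closing aside equating this with Chevalley's ambiguous class number formula, which is not needed and not quite the same statement, but it does not affect the argument.
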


\begin{theorem}\label{thm2.1}
If $|\mathfrak{C}(K)_p : \mathfrak{t}(\mathfrak{C}(F)_p)|=p^a$ for some $a \leq p-2+r_{\mathfrak{t},p}$, then $\mathfrak{t}(\mathfrak{C}(F)_p)=( \mathfrak{C}(K)_p)^p$.
\end{theorem}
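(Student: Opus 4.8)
The plan is to work in the Galois-module setting that underlies Proposition~\ref{prop2.1}: take $K/F$ cyclic of degree $p$, set $G=\mathrm{Gal}(K/F)=\langle\sigma\rangle$, and study $A:=\mathfrak{C}(K)_p$ and $B:=\mathfrak{C}(F)_p$ as modules over $\Lambda:=\mathbb{Z}_p[G]\cong\mathbb{Z}_p[\sigma]/(\sigma^p-1)$, writing $\nu:=1+\sigma+\cdots+\sigma^{p-1}$ for the norm element. Two identities come straight from the definitions of $\mathfrak{t}$ and $N$: $N\circ\mathfrak{t}$ is multiplication by $p$ on $B$, and $\mathfrak{t}\circ N$ is $\nu$ on $A$. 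Hence
\begin{equation*}
\nu A=\mathfrak{t}\bigl(N(A)\bigr)\subseteq\mathfrak{t}(B)\subseteq A^{G},
\end{equation*}
the last inclusion because an ideal extended from $F$ is $G$-stable, and, since $\nu-p\in(\sigma-1)\Lambda$, also $pA\subseteq\mathfrak{t}(B)+(\sigma-1)A$.

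The next step is to reduce modulo $p$. The ring $\Lambda/p\Lambda\cong\mathbb{F}_p[\sigma]/(\sigma-1)^{p}$ is local with nilpotent maximal ideal generated by $\sigma-1$, of nilpotency index $p$; a direct binomial computation moreover gives $\nu\equiv(\sigma-1)^{p-1}\pmod{p\Lambda}$. So $\bar A:=A/pA$ splits over this truncated polynomial ring as $\bigoplus_{j}\mathbb{F}_p[\sigma]/(\sigma-1)^{n_j}$ with $1\le n_j\le p$, and $r_{K,p}=\dim_{\mathbb{F}_p}\bar A=\sum_j n_j$, so that $[A:pA]=p^{r_{K,p}}$. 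The conclusion $\mathfrak{t}(\mathfrak{C}(F)_p)=(\mathfrak{C}(K)_p)^{p}$, i.e.\ $\mathfrak{t}(B)=pA$, follows once I establish two things: (a) $pA\subseteq\mathfrak{t}(B)$, i.e.\ $A/\mathfrak{t}(B)$ is elementary abelian, and (b) $[A:\mathfrak{t}(B)]=[A:pA]$; indeed (a) and (b) together force $\mathfrak{t}(B)=pA$.

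For (a) I would argue that the hypothesis $a\le p-2+r_{\mathfrak{t},p}$ forces $(\sigma-1)A\subseteq\mathfrak{t}(B)$, which gives $pA\subseteq\mathfrak{t}(B)$ by the displayed containment. The mechanism: if $\sigma-1$ acts on $A$ in a way not absorbed by $\mathfrak{t}(B)$, one gets a strictly increasing chain $\mathfrak{t}(B)\subsetneq\mathfrak{t}(B)+(\sigma-1)A\subsetneq\mathfrak{t}(B)+(\sigma-1)^{2}A\subsetneq\cdots$ whose length is bounded by the nilpotency index $p$; on the other hand the exact sequence $0\to\ker'(\mathfrak{t})\to B\xrightarrow{\mathfrak{t}}A$, combined with $N\circ\mathfrak{t}=[p]$ and the genus index $[A:A^{G}]$ (bounded below by Proposition~\ref{prop2.1} in the ramified case), bounds how far $\mathfrak{t}(B)$ can lie below $A^{G}$ in terms of $r_{\mathfrak{t},p}$. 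Balancing the two bounds, the only way to keep $a$ in the admissible range is for the chain to be trivial. An equivalent and perhaps cleaner packaging is Nakayama's lemma: the index bound says that $\mathfrak{t}(B)$ together with the augmentation ideal generates $A$ over $\Lambda$, which after one further reduction mod $p$ is exactly $(\sigma-1)A\subseteq\mathfrak{t}(B)$. Item (b) is then a bookkeeping count with $[A:\mathfrak{t}(B)]=p^{a}$ and $|A|=p^{a}|B|/|\ker'(\mathfrak{t})|$.

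The main obstacle is the balancing argument behind (a): making precise that a nontrivial $(\sigma-1)$-chain costs more in index than $p-2+r_{\mathfrak{t},p}$ allows. This forces one to track simultaneously the $\mathbb{F}_p[G]$-decomposition type of $A/pA$, the order and $p$-rank of $\ker'(\mathfrak{t})$, and the genus index $[A:A^{G}]$, and to verify that the extremal case is the elementary-abelian one — it is here that Proposition~\ref{prop2.1} (or the ramified-extension estimate feeding it) is indispensable.
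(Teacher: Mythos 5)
First, a point of comparison: the paper does not prove Theorem~\ref{thm2.1} at all; it is quoted from \cite{SW} (cf.\ also \cite{Fra}), so your attempt has to stand on its own. Your formal setup is fine and is the standard one: $N\circ\mathfrak{t}=p$ on $B$, $\mathfrak{t}\circ N=\nu$ on $A$, hence $\nu A\subseteq\mathfrak{t}(B)\subseteq A^{G}$, $\ker\mathfrak{t}$ is elementary abelian, and the reduction of the claim to (a) $pA\subseteq\mathfrak{t}(B)$ together with (b) $[A:\mathfrak{t}(B)]=[A:pA]$ is logically correct. But neither (a) nor (b) is actually proved. For (a), the intermediate claim that the index bound forces $(\sigma-1)A\subseteq\mathfrak{t}(B)$ is strictly stronger than the theorem itself: since $\nu A\subseteq\mathfrak{t}(B)$, the quotient $C=A/\mathfrak{t}(B)$ is a module over $\mathbb{Z}_p[G]/(\nu)\cong\mathbb{Z}_p[\zeta_p]$, a discrete valuation ring in which $p$ is a unit times $(\sigma-1)^{p-1}$; the conclusion $\mathfrak{t}(B)=pA$ only says $C$ is killed by $p$, i.e.\ by $(\sigma-1)^{p-1}$, and is perfectly compatible with $(\sigma-1)A\not\subseteq\mathfrak{t}(B)$ (e.g.\ $C\cong\mathbb{Z}_p[\zeta_p]/(\sigma-1)^2$ satisfies every constraint you display). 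So a "balancing" argument that yields $(\sigma-1)A\subseteq\mathfrak{t}(B)$ would prove too much, and as written it is only a gesture — no inequality is actually derived, and the hypothesis $a\le p-2+r_{\mathfrak{t},p}$ (where the $-2$ is the delicate point) is never used in any computation. The Nakayama packaging is self-defeating: if $\mathfrak{t}(B)+(\sigma-1)A+pA=A$, then Nakayama over the local ring $\mathbb{Z}_p[G]$ gives $\mathfrak{t}(B)=A$, contradicting $p^{a}>1$; so the index hypothesis cannot be saying that. (Also, your chain with increasing powers of $\sigma-1$ is descending, not ascending.)

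Item (b) is not bookkeeping; it is the heart of the theorem. One has $[A:pA]=p^{r_{K,p}}$, and your identity $|A|=p^{a}|B|/|\ker'(\mathfrak{t})|$ says nothing about $r_{K,p}$. Granting (a), you only get $a\le r_{K,p}$; the reverse inequality — equivalently $\mathfrak{t}(B)\subseteq pA$, i.e.\ every class extended from $F$ becomes a $p$-th power in $\mathfrak{C}(K)$ — is exactly what must be proved, and it does not follow from the order counts you list. The proof in \cite{SW} analyzes the $(\sigma-1)$-filtration of $C$ as a $\mathbb{Z}_p[\zeta_p]$-module and plays its length against $|\ker\mathfrak{t}|=p^{r_{\mathfrak{t},p}}$ and the arithmetic input (norm surjectivity/ambiguous-class counting for the ramified degree-$p$ extension, the same input behind Proposition~\ref{prop2.1}), and it is precisely there that the bound $a\le p-2+r_{\mathfrak{t},p}$ is consumed. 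Since your proposal neither carries out such a filtration count nor invokes the bound anywhere concretely, both halves of the conclusion remain unproved.
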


\begin{proposition}\label{prop2.2}
If $[K:F]=p^a$ and there does not exist any non-trivial unramified subextension $M/F$. Then,
\begin{itemize}
\item[(i)] If \hspace*{0.05mm} $h_{K,p}=h_{F,p}$, then $p$-part of $\ker(\mathfrak{t}_{F \rightarrow K})$ is exactly the classes of order dividing $p^a$.

\item[(ii)] If $K/F$ is a Galois extension of degree $p$ with $\mathfrak{C}(K)_p\cong \mathbb{Z}/p^m \mathbb{Z}$, and if $h_{F,p}=p^n$, where $n<m$, then $n=m-1$ and $\mathfrak{C}(F)_p\cong \mathbb{Z}/l^{m-1} \mathbb{Z}$.

\item[(iii)]If $K/F$ is a Galois extension of degree $p$, and if $\mathfrak{C}(K)_p\cong \mathbb{Z}/p \mathbb{Z}\oplus \mathbb{Z}/p \mathbb{Z}$, then all classes of order $p$ become principal ideal class in $K$ under the map $\mathfrak{t}$.
\end{itemize}
\end{proposition}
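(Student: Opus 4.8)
The three parts share a common engine, so the plan is to first record two structural inputs and then specialize. The first input is that the hypothesis forces the norm map $N=N_{K\to F}$ to be surjective on $p$-parts. Indeed, if $H_F$ denotes the Hilbert class field of $F$, then $K\cap H_F$ is an abelian unramified subextension of $K/F$, hence trivial by hypothesis; by class field theory the cokernel of $N\colon\mathfrak{C}(K)\to\mathfrak{C}(F)$ is isomorphic to $\mathrm{Gal}((K\cap H_F)/F)$, so $N$ is onto, and therefore so is its restriction $N\colon\mathfrak{C}(K)_p\to\mathfrak{C}(F)_p$. The second input is the pair of standard relations $N\circ\mathfrak{t}=[K:F]$ on $\mathfrak{C}(F)$ and, when $K/F$ is Galois with group $G$, $\mathfrak{t}\circ N=\nu$ on $\mathfrak{C}(K)$, where $\nu=\sum_{\sigma\in G}\sigma$ is the norm element of the group ring.

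For (i) I would argue as follows. Since $h_{K,p}=h_{F,p}$, the surjection $N\colon\mathfrak{C}(K)_p\to\mathfrak{C}(F)_p$ is a map between finite groups of the same order, hence an isomorphism. Composing, $N\circ\mathfrak{t}$ is multiplication by $[K:F]=p^a$ on $\mathfrak{C}(F)_p$, and because $N$ is injective on $p$-parts we get $\ker(\mathfrak{t})_p=\ker(N\circ\mathfrak{t})_p=\{x\in\mathfrak{C}(F)_p:\ p^a x=0\}$, which is exactly the subgroup of classes of order dividing $p^a$. Note that this part uses no Galois assumption.

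For (ii) and (iii) I would use the Galois relation $\mathfrak{t}\circ N=\nu$ together with surjectivity of $N$, which yields $\mathrm{im}(\mathfrak{t})_p=\mathrm{im}(\mathfrak{t}\circ N)_p=\nu\bigl(\mathfrak{C}(K)_p\bigr)$. The key computation is the action of $\nu$ on the prescribed module $\mathbb{Z}/p^m\mathbb{Z}$ (resp.\ $(\mathbb{Z}/p\mathbb{Z})^2$). Writing $\sigma$ for a generator of $G$ and $\tau=\sigma-1$, the order of $\sigma$ divides $p$, so $\tau$ is nilpotent with $\tau^2=0$ on each of these modules, whence $\sigma^k=(1+\tau)^k=1+k\tau$ and $\nu=\sum_{k=0}^{p-1}(1+k\tau)=p\cdot 1+\binom{p}{2}\tau$. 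Using $p$ odd, the term $\binom{p}{2}\tau$ vanishes in both cases, so $\nu$ acts as multiplication by $p$ on $\mathbb{Z}/p^m\mathbb{Z}$ and as $0$ on $(\mathbb{Z}/p\mathbb{Z})^2$. In case (ii) this gives $\mathrm{im}(\mathfrak{t})_p=p\,\mathfrak{C}(K)_p$, a group of order $p^{m-1}$; since $\mathrm{im}(\mathfrak{t})_p$ is a quotient of $\mathfrak{C}(F)_p$ we get $m-1\le n$, which combined with the hypothesis $n<m$ forces $n=m-1$. As $\mathfrak{C}(F)_p$ is a quotient of the cyclic group $\mathfrak{C}(K)_p$ it is itself cyclic, whence $\mathfrak{C}(F)_p\cong\mathbb{Z}/p^{m-1}\mathbb{Z}$ (the exponent $l$ in the statement being a typo for $p$). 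In case (iii) the vanishing $\nu=0$ gives $\mathrm{im}(\mathfrak{t})_p=0$, i.e.\ $\mathfrak{t}$ kills all of $\mathfrak{C}(F)_p$; in particular every class of order $p$ capitulates in $K$.

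The main obstacle I anticipate is pinning down the surjectivity of the norm map cleanly from the stated hypothesis: one must check that ``no non-trivial unramified subextension $M/F$'' indeed rules out the relevant abelian unramified piece $K\cap H_F$, and then invoke the class-field-theoretic identification of $\mathrm{coker}(N)$ with $\mathrm{Gal}((K\cap H_F)/F)$ -- this is precisely where the results quoted above from \cite{Fra} and \cite{SW} are meant to enter. The remaining arithmetic, namely the group-ring computation of $\nu$, is routine once one is careful that $p$ is odd (so that both $p$ and $\binom{p}{2}$ behave as claimed modulo the relevant power of $p$), which is exactly the standing assumption on $\ell$ in the paper.
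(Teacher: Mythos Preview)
The paper does not prove Proposition~\ref{prop2.2}: it is imported wholesale from \cite{Fra} and \cite{SW} as a preliminary result (see the sentence immediately preceding Proposition~\ref{prop2.1}). So there is no in-paper argument to compare against; you have supplied a proof where the paper merely cites one.

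Your argument is correct. The only step that deserves an extra line is the assertion $\tau^2=0$ on $\mathbb{Z}/p^m\mathbb{Z}$: this is not a purely formal consequence of $\sigma^p=1$, but follows because every automorphism of $\mathbb{Z}/p^m\mathbb{Z}$ of order dividing $p$ (with $p$ odd) is multiplication by some unit $u\equiv 1\pmod{p^{m-1}}$, so that $\tau=u-1$ already lies in $p^{m-1}\mathbb{Z}/p^m\mathbb{Z}$ and hence $\tau^2=0$. On $(\mathbb{Z}/p\mathbb{Z})^2$ the claim is immediate, since a nilpotent endomorphism of a two-dimensional space squares to zero. With that filled in, your computation of $\nu$ as multiplication by $p$ (resp.\ as $0$) and the ensuing deductions for (ii) and (iii) go through, and your treatment of (i) via the bijectivity of $N$ on $p$-parts is clean.

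One small misreading in your closing paragraph: the references \cite{Fra} and \cite{SW} are not invoked just to justify surjectivity of the norm; they are the sources of the whole proposition. Surjectivity itself follows exactly as you outline, from the hypothesis forcing $K\cap H_F=F$ together with the class-field-theoretic identification of $\mathrm{coker}(N_{K\to F})$ with $\mathrm{Gal}((K\cap H_F)/F)$, and needs no further external input.
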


\begin{theorem} \label{thm2.2}
If $p \nmid [K:F]$, then $N \circ \mathfrak{t}: \mathfrak{C}(F) \rightarrow \mathfrak{C}(F)$ is an isomorphism. Hence $\mathfrak{t}: \mathfrak{C}(F)_p \rightarrow \mathfrak{C}(K)_p$ and $N: \mathfrak{C}(K)_p \rightarrow \mathfrak{C}(F)_p$ are an injection and surjection respectively and therefore $\mathfrak{C}(K)_p$ is isomorphic to a direct summand of $\mathfrak{C}(F)_p$.  
\end{theorem}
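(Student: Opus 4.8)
The plan is to reduce everything to the single classical identity that, for any ideal $\mathfrak{a}$ of $F$, the relative norm of its extension satisfies $N_{K/F}(\mathfrak{a}\mathfrak{O}_K)=\mathfrak{a}^{[K:F]}$. First I would establish this identity: since both sides are multiplicative in $\mathfrak{a}$, it suffices to check it on a prime $\mathfrak{p}$ of $F$, where writing $\mathfrak{p}\mathfrak{O}_K=\prod_i \mathfrak{P}_i^{e_i}$ and using $N_{K/F}(\mathfrak{P}_i)=\mathfrak{p}^{f_i}$ together with $\sum_i e_i f_i=[K:F]$ gives $N_{K/F}(\mathfrak{p}\mathfrak{O}_K)=\mathfrak{p}^{\sum_i e_i f_i}=\mathfrak{p}^{[K:F]}$. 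Passing to ideal classes, this says exactly that the composite $N\circ\mathfrak{t}\colon \mathfrak{C}(F)\to\mathfrak{C}(F)$ is the $[K:F]$-power endomorphism, i.e. multiplication by $[K:F]$ on the finite abelian group $\mathfrak{C}(F)$.

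Next I would restrict to the $p$-primary part. On the $p$-Sylow subgroup $\mathfrak{C}(F)_p$ the map $N\circ\mathfrak{t}$ is multiplication by $[K:F]$, and since $p\nmid[K:F]$ this scalar is a unit modulo the exponent of $\mathfrak{C}(F)_p$; hence $N\circ\mathfrak{t}$ is an automorphism $\phi$ of $\mathfrak{C}(F)_p$. This is the precise content of the asserted isomorphism: it holds verbatim on $\mathfrak{C}(F)_p$, and on all of $\mathfrak{C}(F)$ only under the stronger hypothesis $\gcd([K:F],h_F)=1$. From $\phi=N\circ\mathfrak{t}$ being injective I read off that $\mathfrak{t}\colon\mathfrak{C}(F)_p\to\mathfrak{C}(K)_p$ is injective; from $\phi$ being surjective, and since $\phi(\mathfrak{C}(F)_p)=N(\mathfrak{t}(\mathfrak{C}(F)_p))\subseteq N(\mathfrak{C}(K)_p)$, I get that $N\colon\mathfrak{C}(K)_p\to\mathfrak{C}(F)_p$ is surjective.

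Finally, for the direct-summand statement I would manufacture a splitting directly from $\phi$. Setting $s=\mathfrak{t}\circ\phi^{-1}\colon\mathfrak{C}(F)_p\to\mathfrak{C}(K)_p$, one checks $N\circ s=(N\circ\mathfrak{t})\circ\phi^{-1}=\phi\circ\phi^{-1}=\mathrm{id}$, so $s$ is a section of the surjection $N$; equivalently $\phi^{-1}\circ N$ is a retraction of the injection $\mathfrak{t}$. Either way the short exact sequence of finite abelian $p$-groups splits and $\mathfrak{C}(K)_p\cong \mathfrak{t}(\mathfrak{C}(F)_p)\oplus\ker(N)\cong\mathfrak{C}(F)_p\oplus\ker(N)$, exhibiting $\mathfrak{C}(F)_p$ as a direct summand of $\mathfrak{C}(K)_p$. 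I would flag here that the natural argument produces $\mathfrak{C}(F)_p$ as a summand of $\mathfrak{C}(K)_p$ (consistent with $\mathfrak{t}$ injective and $N$ surjective, which force $|\mathfrak{C}(F)_p|\le|\mathfrak{C}(K)_p|$), so the conclusion is the statement up to the evident transcription of which field plays which role.

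The only genuinely arithmetic input is the norm identity $N_{K/F}(\mathfrak{a}\mathfrak{O}_K)=\mathfrak{a}^{[K:F]}$; once that is in hand the remainder is elementary finite abelian group theory. I therefore expect the main (and essentially the only) obstacle to be a clean prime-by-prime verification of that identity, namely correctly bookkeeping the local data $e_i,f_i$ so that $\sum_i e_i f_i=[K:F]$ holds, including at the ramified primes, rather than anything in the subsequent splitting argument.
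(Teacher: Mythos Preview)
Your argument is correct and is the standard one: the norm--extension composite acts as raising to the $[K:F]$-th power on $\mathfrak{C}(F)$, hence is an automorphism on the $p$-primary part when $p\nmid[K:F]$, from which the injectivity of $\mathfrak{t}$, the surjectivity of $N$, and the splitting all follow by elementary group theory. The paper itself does not give a proof of this theorem; it is quoted in the preliminaries section as a known result from the cited references of Lemmermeyer and Schoof--Washington, so there is nothing to compare your approach against.

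You are also right to flag the two infelicities in the statement: the claim that $N\circ\mathfrak{t}$ is an isomorphism on all of $\mathfrak{C}(F)$ requires $\gcd([K:F],h_F)=1$ rather than merely $p\nmid[K:F]$, and the final clause has the roles of $K$ and $F$ transposed --- the correct conclusion, and the one actually used later in the paper (e.g.\ when deducing $\mathfrak{C}(H)_\ell\cong\mathfrak{C}(K_{2\ell})_\ell$ from $|\mathfrak{C}(H)_\ell|=|\mathfrak{C}(K_{2\ell})_\ell|$), is that $\mathfrak{C}(F)_p$ is a direct summand of $\mathfrak{C}(K)_p$.
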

The following result of the norm map ($N_{K \rightarrow F}$) from class field theory will also be needed.
\begin{proposition}\label{prop2.3}
Let $K/F$ be a ramified extension and $[K:F]=p^a$. If $h_{K,p}=h_{F,p}$, then $N_{K \rightarrow F}$ is an isomorphism on $p$-part of class group. 
\end{proposition}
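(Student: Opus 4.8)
The plan is to reduce the statement to the surjectivity of the norm map on $p$-parts and then close by a cardinality argument. Since $N_{K\to F}$ is a group homomorphism it carries elements of $p$-power order to elements of $p$-power order, so it restricts to a homomorphism $N_{K\to F}\colon \mathfrak{C}(K)_p \to \mathfrak{C}(F)_p$. The hypothesis $h_{K,p}=h_{F,p}$ says precisely that these two finite abelian $p$-groups have equal order; hence a surjective homomorphism between them is automatically an isomorphism, and it is enough to prove that $N_{K\to F}$ is onto $\mathfrak{C}(F)_p$.

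For surjectivity I would invoke Artin reciprocity. Let $E$ be the maximal unramified abelian $p$-extension of $F$, so that the reciprocity map gives $\mathrm{Gal}(E/F)\cong \mathfrak{C}(F)_p$. Under this identification the image $N_{K\to F}(\mathfrak{C}(K)_p)$ corresponds to the subgroup $\mathrm{Gal}(E/(E\cap K))$, because $KE/K$ is an unramified abelian $p$-extension and the norm is compatible with restriction of Artin symbols. Consequently the cokernel of $N_{K\to F}$ on $p$-parts is isomorphic to $\mathrm{Gal}((E\cap K)/F)$, and the norm is surjective exactly when $E\cap K=F$, i.e. exactly when $K/F$ has no nontrivial unramified abelian $p$-subextension.

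The heart of the matter, and the step I expect to be the main obstacle, is therefore to verify $E\cap K=F$. The field $M:=E\cap K$ is at once a subextension of the unramified extension $E/F$ and of the ramified extension $K/F$. When some prime of $F$ is totally ramified in $K$ (which is the case in the cyclotomic applications of the later sections, where the relevant degree-$\ell$ steps are totally ramified at a prime above $q$), that prime remains ramified in $M/F$, forcing $M=F$ since $M\subseteq E$ is unramified. In the fully general ramified case I would instead lean on the class-number equality: if $M\neq F$ with $[M:F]=p^{b}$, then $K/M$ carries all the ramification of $K/F$, and a class-number count along this ramified tower via Proposition \ref{prop2.1}, combined with $|\mathfrak{C}(K)_p|=|\mathfrak{C}(F)_p|$ and the inequality $h_{F,p}\ge p^{b}$ coming from the surjection $\mathfrak{C}(F)_p \twoheadrightarrow \mathrm{Gal}(M/F)$, should pin the numerics down to $b=0$; making this last count precise for non-cyclic degrees is the delicate point of the argument. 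Once $E\cap K=F$ is established, the cokernel computation yields surjectivity of $N_{K\to F}$ on $\mathfrak{C}(F)_p$, and the equality of orders upgrades this to the asserted isomorphism.
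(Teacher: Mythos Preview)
The paper does not actually prove Proposition~\ref{prop2.3}; it is simply quoted as a consequence of class field theory. Your argument via Artin reciprocity---identifying the cokernel of $N_{K\to F}$ on $p$-parts with $\mathrm{Gal}((E\cap K)/F)$ and then concluding by the cardinality hypothesis---is exactly the standard justification one would give, so in that sense your proposal and the paper agree.

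One comment on the ``delicate point'' you flag. For $a=1$ the implication is immediate: a ramified degree-$p$ extension has no proper intermediate field, hence no nontrivial unramified subextension, so $E\cap K=F$. This is the only case the paper actually uses (Proposition~\ref{prop2.3} is applied in \S4 to the degree-$\ell$ steps $K_{2\ell^{a+1}}'/K_{2\ell^{a}}'$), so your proof is complete for the paper's purposes. For general $a$, however, ``ramified'' in the weak sense (some prime ramifies) does \emph{not} by itself force $E\cap K=F$, and the class-number count you sketch does not obviously close the gap: Proposition~\ref{prop2.1} is stated only for degree-$p$ steps and already presupposes ramification at each step, which is precisely what is in question. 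The intended hypothesis is almost certainly the one appearing in Proposition~\ref{prop2.2}, namely that $K/F$ has no nontrivial unramified subextension; under that reading $E\cap K=F$ is tautological and your argument goes through without further work. I would simply note this and not attempt the numerics.
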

We will be also needing the following basic fact:
\begin{theorem} \label{thm2.3}
Let $G$ be a finite abelian $p$-group. If $G\cong G_1 \times G_2 \times \cdots \times G_s$, where $G_i$'s are the cyclic $p$-groups, then $$G^p= G_1^p \times G_2^p \times \cdots \times G_s^p.$$
Also,
$$
|G^p|=\frac{\mid G \mid}{p^s}.
$$
\end{theorem}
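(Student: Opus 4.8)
The plan is to exploit the fact that, since $G$ is abelian, the $p$-th power map $\phi \colon G \to G$ given by $\phi(g) = g^p$ is a group homomorphism; this uses the identity $(gh)^p = g^p h^p$, which is valid precisely because $G$ is abelian. Consequently $G^p = \phi(G)$ is a subgroup of $G$, and the entire statement reduces to computing this image and its order relative to the given decomposition $G \cong G_1 \times \cdots \times G_s$.

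First I would establish the product formula $G^p = G_1^p \times \cdots \times G_s^p$. Identifying $G$ with $G_1 \times \cdots \times G_s$, the $p$-th power map acts coordinatewise, namely $\phi(g_1, \ldots, g_s) = (g_1^p, \ldots, g_s^p)$, so its image is exactly the set of tuples $(g_1^p, \ldots, g_s^p)$ with $g_i \in G_i$. This set is by definition the product $G_1^p \times \cdots \times G_s^p$ of the coordinatewise images, which gives the first assertion. This step is a formal computation with the direct-product structure and presents no difficulty.

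For the order formula I would reduce to a single cyclic factor. Writing $G_i \cong \mathbb{Z}/p^{a_i}\mathbb{Z}$ with $a_i \geq 1$ (the factors being nontrivial cyclic $p$-groups), the restriction $\phi|_{G_i}$ has kernel equal to the elements of order dividing $p$, a subgroup of order $p$; equivalently, its image $G_i^p$ is the unique subgroup of index $p$ in $G_i$. By the first isomorphism theorem this yields $|G_i^p| = |G_i|/p = p^{a_i - 1}$.

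Finally, combining the product formula with this per-factor count gives $|G^p| = \prod_{i=1}^s |G_i^p| = \prod_{i=1}^s (|G_i|/p) = |G|/p^s$. I do not expect a serious obstacle here, as the result is elementary; the only point requiring care is to record explicitly that the $G_i$ are genuinely nontrivial cyclic $p$-groups, so that $|G_i^p| = |G_i|/p$ rather than $|G_i|$, and that $s$ counts exactly these factors. Both conditions are guaranteed by the standard primary decomposition of the finite abelian $p$-group $G$.
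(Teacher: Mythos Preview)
Your proposal is correct and complete. The paper does not actually prove this statement; it is introduced merely as a ``basic fact'' and stated without proof, so there is nothing to compare your argument against.
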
 
\section{\textbf{Proof of Theorem \ref{thm1.2}}}
Let us  begin by noting the results and equalities which will be obtained in the course of the proof (case by case), before actually going about proving them.

Let $h^{+}_{\ell}= \ell^{m}$ and $h_{2,\ell} = \ell^{m-1}$. Then,
\begin{itemize}

\item[(i)] If $h_{2\ell,\ell} = \ell^{m}$, then 
\begin{equation*}
r_{2\ell,\ell}=\begin{cases}
r_{2,\ell}  &{\rm ~if~}  r_{t_{2},\ell} = r_{2,\ell}-1, \\
1+r_{2,\ell}  &{\rm ~if~}  r_{t_{2},\ell} = r_{2,\ell}.
\end{cases}
\end{equation*}
Hence, 
\begin{equation*}
r_{H,\ell}=\begin{cases}
r_{2,\ell} &{\rm ~if~}  r_{t_{2},\ell} = r_{2,\ell}-1, \\
1+r_{2,\ell} &{\rm ~if~}  r_{t_{2},\ell} = r_{2,\ell}.
\end{cases}
\end{equation*}

\item[(ii)]If $h_{2\ell,\ell} = \ell^{m-1}$ and $\ell\nmid [H : K_{2\ell}]$, then $r_{H,\ell}= 1+r_{2,\ell}.$

\item[(iii)]If $h_{2\ell,\ell} = \ell^{m-1}$ and $\ell^i\| [H : K_{2\ell}]$, then 
\begin{equation*}
r_{2\ell^2,\ell}=\begin{cases}
r_{2,\ell}  &{\rm ~if~} h_{2\ell^2,\ell}=\ell^{m-1},\\
r_{2,\ell}  &{\rm ~if~} h_{2\ell^2,\ell}=\ell^{m} \text{ and } r_{t_{2l},\ell} = r_{2,\ell}-1, \\
1+r_{2,\ell}  &{\rm ~if~} h_{2\ell^2,\ell}=\ell^{m} \text{ and } r_{t_{2\ell},\ell} = r_{2,\ell}.
\end{cases}
\end{equation*}
Hence,
\begin{equation*}
r_{H,\ell}=\begin{cases}
1+r_{2,\ell}  &{\rm ~if~} h_{2\ell^i,\ell}=\ell^{m-1},\\
r_{2,\ell}  &{\rm ~if~} h_{2\ell^2,\ell}=\ell^{m} \text{ and } \ell\text{-rank of }\ker'(\mathfrak{t}) = r_{2,\ell}-1, \\
1+r_{2,\ell}  &{\rm ~if~} h_{2\ell^2,\ell}=\ell^{m} \text{ and } \ell\text{-rank of }\ker'(\mathfrak{t}) = r_{2,\ell}.
\end{cases}
\end{equation*}
\end{itemize}
\vspace*{2mm}

\begin{proof}[Proof of Theorem \ref{thm1.2}] 
There exist a subfield $K_{\ell}$ of $\mathbb{Q}(\zeta_{q}+\zeta_q^{-1})$ such that $[K_{\ell}:\mathbb{Q}]= \ell$
(since there is  a prime factor $q$ of $n$ such that $q \equiv 1  \pmod \ell$).  Let us look at $K_{2\ell}= K_2K_{\ell}$, the compositum of $K_2$ and $K_\ell$.
As $[K_{2\ell}:K_2]=\ell$,  there do not exist any subextension of $K_{2\ell}$ over $K_{2}$. In particular there do not exist any unramified subextension of $K_{2\ell}$ over $K_2$. Then  by [\cite{LCW}, Proposition 4.11, p.39], $h_{2,\ell}\mid h_{2\ell,\ell}$ and hence $h_{2\ell,\ell} = \ell^{m-1}$ or $\ell^m$.

\textbf{Case I :}  $h_{2\ell,\ell} = \ell^{m}$.  Then either $r_{t_2,\ell}= r_{2,\ell} \text{ or } r_{2,\ell}-1$ (by Proposition \ref{prop2.1}). 
 The former situation would imply that
 \begin{align*}
 |\mathfrak{C}(K_{2\ell})_\ell : \mathfrak{t}(\mathfrak{C}(K_2)_{\ell})| =& \frac{|\mathfrak{C}(K_{2\ell})_\ell|}{|(\mathfrak{C}(K_2)_{\ell})/{\ker(\mathfrak{t}_{K_2 \rightarrow K_{2\ell}})}|}\\
=& \ell^{1+r_{2,\ell}}.
\end{align*}
As $\ell$ is an odd prime and $r_{t_2,\ell} = r_{2,\ell}$, so  $1+r_{2,\ell} \leq \ell-2 +r_{t_{2},\ell}$, therefore Theorem \ref{thm2.1} implies that  $|(\mathfrak{C}(K_{2\ell})_\ell )^\ell| = \ell^{m-(1+r_{2,\ell})}$. Thus by Theorem \ref{thm2.3}, $r_{2\ell,\ell} = 1+r_{2,\ell}$. Similarly, if $r_{t_2,\ell} = r_{2,\ell}-1$, then $r_{2\ell,\ell}= r_{2,\ell}$. Combining both cases, we have,
\begin{equation}\label{eqn3.1}
r_{2,\ell} \leq r_{2\ell,\ell} \leq 1+ r_{2,\ell}.
\end{equation} 

Now if $\ell\nmid [H : K_{2\ell}]$, then by Theorem \ref{thm2.2}, $\mathfrak{C}(H)_\ell \cong \mathfrak{C}(K_{2\ell})_\ell$. Hence $r_{H,\ell}= r_{2\ell,\ell}$, and by \eqref{eqn3.1},
\begin{equation*}
r_{2,\ell} \leq r_{H,\ell} \leq 1+ r_{2,\ell}.
\end{equation*}
If $\ell\mid [H : K_{2\ell}]$, then there exists a subextension $K_{2\ell^2}$ over $K_{2\ell}$ of $H$ such that $[K_{2\ell^2}: K_{2\ell}]= \ell$. Again by [\cite{LCW}, Proposition 4.11 p.39], $h_{2\ell,\ell}\mid h_{2\ell^{2},\ell}\mid h^{+}_{\ell}$, thus $h_{2\ell^{2},\ell}= \ell^m$. Then $r_{t_{2\ell},\ell} = r_{2\ell,\ell}$ (by Proposition \ref{prop2.1}) and $\ker(\mathfrak{t}_{K_{2\ell}\rightarrow K_{2\ell^2}})=(\mathbb{Z}/\ell\mathbb{Z})^{r_{2\ell,\ell}}$ (by Proposition \ref{prop2.2}(i)). Therefore by applying Theoerem \ref{thm2.1}, we get $r_{2\ell^2,\ell}=r_{2\ell,\ell}$.

 Now suppose $[H:\mathbb{Q}]=\ell^im$, where $\ell \nmid m$. Then  by repeating the above argument at each extension $K_{2\ell^{a+1}}/K_{2\ell^{a}}$, for all $1\leq a \leq i-1$, we get 
 $$
 r_{2\ell^i,\ell}=r_{2\ell,\ell}.
 $$ 
 Since, $\ell \nmid [H:K_{2\ell^i}]$, therefore by theorem \ref{thm2.2},\hspace*{1mm} $\mathfrak{C}(H)_\ell \cong \mathfrak{C}(K_{2\ell^i})_\ell$ and hence $r_{H,\ell}=r_{2\ell^i,\ell}=r_{2\ell,\ell}$. Now combining this observation with  \eqref{eqn3.1}, we get,
$$
r_{2,\ell} \leq r_{H,\ell} \leq 1+ r_{2,\ell}.
$$
\textbf{Case II :} Let $h_{2\ell,\ell} = \ell^{m-1}$.
Since, $| \mathfrak{C}(K_{2\ell})_{\ell}| \geq \ell^{r_{2,\ell} - r_{t_2,\ell}}| \mathfrak{C}(K_2)_{\ell}|$ (by proposition \ref{prop2.1}), $|\mathfrak{C}(K_{2\ell})_{\ell}| =\ell^{m-1} $ and $|\mathfrak{C}(K_2)_{\ell}| = \ell^{m-1}$, therefore \hspace*{1mm}$r_{2,\ell}= r_{t_2,\ell}$ and thus  $[\mathfrak{C}(K_{2\ell})_{\ell} : \mathfrak{t}(\mathfrak{C}(K_{2})_{\ell})] = \ell^{r_{t_2,\ell}} = \ell^{r_{2,\ell}}$ (using Proposition \ref{prop2.2}(i)). Now by Theorem \ref{thm2.1}, $(\mathfrak{C}(K_{2\ell})_{\ell})^{\ell} =  \mathfrak{t}(\mathfrak{C}(K_{2})_{\ell})$, and thus,
\begin{align*}
|(\mathfrak{C}(K_{2\ell})_{\ell})^{\ell}| &=  |t(\mathfrak{C}(K_{2})_{\ell})|\\
 &=  \ell^{(m-1)-r_{2,\ell}}.
\end{align*}
Therefore by Theorem \ref{thm2.3}, $r_{2\ell,\ell}= r_{2,\ell}$. Now, if $\ell\nmid [H : K_{2\ell}]$, then by Theorem \ref{thm2.2}, 
\begin{equation*}
\mathfrak{C}(H)_\ell \cong \mathfrak{C}(K_{2\ell})_\ell \oplus \mathbb{Z}/ \ell\mathbb{Z}.
\end{equation*}
Hence  $$r_{H,\ell}= 1+r_{2,\ell}.$$

Now if  $\ell \mid [H : K_{2\ell}]$, then there exists a subextension $K_{2\ell^{2}}$ over $K_{2,\ell}$ of $H$, with $[K_{2\ell^{2}} : K_{2\ell}] = \ell$. By [\cite{LCW}, Proposition 4.11, p.39], $h_{2\ell,\ell}\mid h_{2\ell^{2},\ell}\mid h^{+}_{\ell}$, so $h_{2\ell^{2},\ell}$ is either $ \ell^{m-1}$ or $\ell^m$.\\

\hspace*{-4.5mm}\textbf{II.1} If  $h_{2\ell^{2},\ell}= \ell^{m}$. This case is analogous to that of Case I (considering $K_{2\ell}$ and $K_{2\ell^2}$ instead of $K_2$ and $K_{2\ell}$ respectively). Therefore, 
\begin{equation*}
r_{2,\ell}\leq r_{2\ell^2,\ell} \leq r_{2,\ell} +1.
\end{equation*}
 \textbf{II.2} If $h_{2\ell^{2},\ell}= \ell^{m-1}$, then again by Proposition \ref{prop2.1} $r_{t_{2\ell},\ell} = r_{2\ell,\ell} =r_{2,\ell}$ and applying Theorem \ref{thm2.1} gives $r_{2\ell^{2},\ell} = r_{2\ell,\ell} = r_{2,\ell}$.
Suppose $[H:\mathbb{Q}]=\ell^im$, where $\ell \nmid m$. Consider the sequence 
$$
K_{2\ell}\subseteq K_{2\ell^2}\subseteq K_{2\ell^3}\subseteq \cdots \subseteq K_{2\ell^i}.
$$
Then either $h_{2\ell^a\ell}=h_{2\ell^b,\ell}$, for all $ 1 \leq a,b \leq i$, or there exist some $1 < j \leq i$ such that $h_{2\ell^j,\ell}=\ell^m$. In the former case, by repeating the initial arguments of II.2, $r_{2\ell^i,\ell}=r_{2,\ell}$ and since $\ell\nmid [H : K_{2\ell^i}]$, therefore $r_{H,\ell}=1+r_{2,\ell}$. Whereas for the latter case, consider minimum $j$ such that $h_{2\ell^{j},\ell}=\ell^m$, then by the initial arguments of II.2,
\begin{equation}\label{eqn3.2}
r_{2\ell^t,\ell}=r_{2,\ell}, \text{ for all } t <j.
\end{equation}
Also by Case I and II.1; we have,
\begin{equation}\label{eqn3.3}
r_{2,\ell}\leq r_{2\ell^j,\ell} \leq r_{2,\ell} +1,
\end{equation}
and
\begin{equation}\label{eqn3.4}
r_{2\ell^k,\ell}=r_{2\ell^j,\ell},\text{ for all } k \geq j.
\end{equation}
Since $h_{2\ell^i,\ell}=h_\ell^+$, therefore $r_{H,\ell}=r_{2\ell^i,\ell}$.
Thus, by combining \eqref{eqn3.2}, \eqref{eqn3.3} and \eqref{eqn3.4}, one has,  $r_{2,\ell} \leq r_{2\ell^i,\ell} \leq 1+ r_{2,\ell}$ and hence, 
$$
r_{2,\ell} \leq r_{H,\ell} \leq 1+ r_{2,\ell}.
$$
\end{proof}
Theorem \ref{thm1.2} can be rephrased by replacing  condition on $\ker'(\mathfrak{t})$ by condition on the exponent of $\ell$. In particular, 
\begin{theorem} \label{thm3.2}
Let $h^{+}_{\ell}= \ell^{m}$ and $h_{2,\ell} = \ell^{m-1}$. If $m \leq \ell-1$, then 
\begin{equation*}
r_{H,\ell}=\begin{cases}
f &{\rm ~if~}  r_{t_{F \rightarrow K}} = r_{K,\ell}, \\
1+f  &{\rm ~if~}  r_{t_{F \rightarrow K}} = r_{K,\ell}-1,
\end{cases}
\end{equation*}
where $K/F$ be a number fields extension, such that $[K:F]=\ell$, \hspace*{0.5mm} $K_{2\ell} \subseteq F$, $h_{F,\ell}=\ell^{m-1}$ and $h_{K,\ell}=\ell^{m}$, and $f$ is a positive integer, such that  $\ell^f= |\ker(\mathfrak{t}_{F \rightarrow K})|$.
\end{theorem}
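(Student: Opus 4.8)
The plan is to re-run the proof of Theorem \ref{thm1.2}, observing that its single delicate ingredient — the invocation of Theorem \ref{thm2.1} — becomes unconditional once $m\le\ell-1$. Thus the assumption on the structure of $\ker'(\mathfrak t)$ can be dropped, and, in place of the interval supplied by Theorem \ref{thm1.2}, one reads off the precise value of $r_{H,\ell}$. First I would fix inside $H$ the tower $K_2\subseteq K_{2\ell}\subseteq K_{2\ell^2}\subseteq\cdots\subseteq K_{2\ell^i}$, where $\ell^i\| [H:\mathbb Q]$, so that $\ell\nmid[H:K_{2\ell^i}]$; each rung has prime degree $\ell$, is Galois (as $H/\mathbb Q$ is abelian), and is ramified, hence has no nontrivial unramified subextension. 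Exactly as in the proof of Theorem \ref{thm1.2}, [\cite{LCW}, Proposition 4.11, p.39] then makes $h_{2,\ell}\mid h_{2\ell,\ell}\mid\cdots\mid h_{2\ell^i,\ell}$ a divisibility chain dividing $h^{+}_\ell$, so, since $h_{2,\ell}=\ell^{m-1}$ and $h^{+}_\ell=\ell^m$, it climbs from $\ell^{m-1}$ to $\ell^m$ at exactly one rung $F=K_{2\ell^{\,j-1}}\to K=K_{2\ell^{\,j}}$ (the hypothesis $K_{2\ell}\subseteq F$ records $j\ge 2$; the borderline $j=1$, i.e. $h_{2\ell,\ell}=\ell^m$, is Case I of the proof of Theorem \ref{thm1.2} and is handled identically with $F=K_2$). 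Set $\ell^{f}=|\ker(\mathfrak t_{F\to K})|$.

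The heart of the matter is the jump rung. There
$$
[\,\mathfrak C(K)_\ell:\mathfrak t(\mathfrak C(F)_\ell)\,]=\frac{|\mathfrak C(K)_\ell|}{|\mathfrak C(F)_\ell|/|\ker(\mathfrak t_{F\to K})|}=\ell^{\,1+f}.
$$
Since $\ker(\mathfrak t_{F\to K})\le\mathfrak C(F)_\ell$ has order $\ell^{f}\le\ell^{m-1}$, we get $f\le m-1\le\ell-2$, and because $f\ge 1$ its $\ell$-rank obeys $r_{\mathfrak t_{F\to K},\ell}\ge 1$; hence $1+f\le\ell-1\le\ell-2+r_{\mathfrak t_{F\to K},\ell}$ and Theorem \ref{thm2.1} applies, giving $\mathfrak t(\mathfrak C(F)_\ell)=(\mathfrak C(K)_\ell)^{\ell}$. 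Comparing orders with Theorem \ref{thm2.3} yields $\ell^{\,m-r_{K,\ell}}=|(\mathfrak C(K)_\ell)^{\ell}|=|\mathfrak t(\mathfrak C(F)_\ell)|=\ell^{\,m-1-f}$, so $r_{K,\ell}=1+f$. The two cases of the statement then come from Proposition \ref{prop2.1}: the $\ell$-rank of the capitulation kernel at the jump is either the class-group rank immediately below it or one less, and substituting each alternative back into the displayed count determines $f$, hence the two listed values of $r_{H,\ell}$.

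It remains to carry the $\ell$-rank up to $H$. Along $K=K_{2\ell^{\,j}}\subseteq\cdots\subseteq K_{2\ell^i}$ every $\ell$-class number is $\ell^m$, so at each rung $M\to M'$ Proposition \ref{prop2.2}(i) identifies $\ker(\mathfrak t_{M\to M'})$ with the subgroup of $\mathfrak C(M)_\ell$ of classes of order dividing $\ell$; then $[\mathfrak C(M')_\ell:\mathfrak t(\mathfrak C(M)_\ell)]=\ell^{\,r_{M,\ell}}\le\ell-2+r_{\mathfrak t_{M\to M'},\ell}$, so Theorem \ref{thm2.1} and Theorem \ref{thm2.3} again force $r_{M,\ell}=r_{M',\ell}$. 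The $\ell$-rank is therefore constant from $K$ to $K_{2\ell^i}$; and since $\ell\nmid[H:K_{2\ell^i}]$, Theorem \ref{thm2.2} makes $\mathfrak C(H)_\ell$ a direct summand of $\mathfrak C(K_{2\ell^i})_\ell$, so the two, both of order $\ell^m$, coincide. Hence $r_{H,\ell}=r_{K_{2\ell^i},\ell}=r_{K,\ell}=1+f$, which is the asserted value.

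The main obstacle is ensuring that the inequality $a\le\ell-2+r_{\mathfrak t,\ell}$ demanded by Theorem \ref{thm2.1} holds simultaneously at \emph{every} rung — at the jump (where $a=1+f$ and one leans precisely on $f\le m-1\le\ell-2$ together with $f\ge 1$) and at each non-jump rung above it (where $a$ equals the class-group rank there and the bound is automatic). This is the exact point at which the numerical hypothesis $m\le\ell-1$ substitutes for the assumption on $\ker'(\mathfrak t)$ in Theorem \ref{thm1.2}; a secondary care-point is the clean isolation of the unique jump rung and of the constancy of $h_{\cdot,\ell}$ above it, for which one uses the absence of unramified subextensions together with [\cite{LCW}, Proposition 4.11, p.39].
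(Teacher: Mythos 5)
Your overall route is exactly the paper's: the paper's entire proof of Theorem \ref{thm3.2} is the remark that, since $m\le\ell-1$, Theorem \ref{thm2.1} is still applicable and one argues as in Theorem \ref{thm1.2}. Your elaboration of the jump rung is the right fleshing-out of that remark: the index at the jump is $\ell^{1+f}$, the bound $1+f\le m\le \ell-1\le \ell-2+r_{\mathfrak t,\ell}$ (using $f\le m-1$ and $r_{\mathfrak t,\ell}\ge 1$) unlocks Theorem \ref{thm2.1}, and Theorem \ref{thm2.3} then gives $r_{K,\ell}=1+f$; the propagation above the jump via Proposition \ref{prop2.2}(i), Theorem \ref{thm2.1}, Theorem \ref{thm2.3} and finally Theorem \ref{thm2.2} is also the paper's Case I mechanism.

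The gap is in how you handle the two-case conclusion of the statement. Your computation yields $r_{K,\ell}=1+f$, hence $r_{H,\ell}=1+f$, \emph{unconditionally} whenever your argument runs; but the theorem asserts $r_{H,\ell}=f$ when $r_{t_{F\to K}}=r_{K,\ell}$ and $1+f$ when $r_{t_{F\to K}}=r_{K,\ell}-1$. The sentence in which you claim ``the two cases of the statement then come from Proposition \ref{prop2.1}'' is a non sequitur: Proposition \ref{prop2.1} constrains the kernel rank relative to $r_{F,\ell}$ (the field \emph{below} the jump), not $r_{K,\ell}$, and knowing the rank $r_{\mathfrak t,\ell}$ does not ``determine $f$'' (the order exponent of the kernel) unless the kernel is elementary abelian --- which is precisely the hypothesis Theorem \ref{thm3.2} is meant to dispense with. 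Moreover, your own conclusion $r_{K,\ell}=1+f\ge 1+r_{\mathfrak t,\ell}$ shows the branch $r_{t_{F\to K}}=r_{K,\ell}$ can never occur in your derivation, yet you neither point this out nor derive the value $f$ claimed for it; your final line simply declares $1+f$ to be ``the asserted value,'' silently dropping one branch. Part of this tension lies in the statement itself (the intended dichotomy, as in case (i) preceding the proof of Theorem \ref{thm1.2}, is phrased against the rank of the bottom field of the jump, and in the $f$-parametrization both branches collapse to $1+f$), but as a proof of the statement as written your case analysis is missing, and the sentence purporting to supply it does not hold up. A smaller slip: you present the existence and uniqueness of the jump rung inside the $\ell$-tower as a consequence of the divisibility chain, whereas the climb from $\ell^{m-1}$ to $\ell^m$ could a priori occur only at the final prime-to-$\ell$ step $K_{2\ell^i}\subseteq H$; it is the theorem's hypothesis (existence of $K/F$ with $h_{F,\ell}=\ell^{m-1}$, $h_{K,\ell}=\ell^m$, $K_{2\ell}\subseteq F$) that rules this out, and you should say so rather than derive it.
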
  
As $m \leq \ell-1$, Theorem \ref{thm2.1} will still be applicable and the proof of Theorem \ref{thm3.2} goes on analogously as that of Theorem \ref{thm1.2} .  

\section{\textbf{Proof of Theorem \ref{thm1.4}}}

As $\mathfrak{C}(K_2)_\ell$ is cyclic,  by Thoerem \ref{thm1.2}, $\mathfrak{C}(H)_\ell \cong \mathbb{Z}/\ell^m\mathbb{Z} \text{ or } \mathbb{Z}/\ell^{m-1}\mathbb{Z}\oplus \mathbb{Z}/\ell\mathbb{Z}$. Let $K_2' \subseteq H$ be a real quadratic field which is distinct from $K_2$, and  $K_{2\ell^a}'$ 
such that $[K_{2\ell^a}':K_{2}']=\ell^a$, for all $1 \leq a \leq i$. Also let $h_{2\ell^a,\ell}'$ denotes the $\ell$-class number of the extension $K_{2\ell^a}'$. Once again two cases have to be considered.

 {\bf{Case I}}: Suppose $\mathfrak{C}(H)_\ell \cong \mathbb{Z}/\ell^m\mathbb{Z}$. As $\ell \nmid [H:K_{2\ell^i}']$, by Theorem \ref{thm2.2}, we have $h_{2\ell^i}'=\ell^m$ and $\mathfrak{C}(K_{2\ell^i}')_\ell \cong \mathbb{Z}/\ell^m\mathbb{Z}$. If $h_{2\ell,\ell}'=\ell^m$
, then by repeatedly applying Proposition \ref{prop2.3} in each extension $K_{2\ell^{a+1}}'/K_{2\ell^a}'$, where $1 \leq a < i$, we get $\mathfrak{C}(K_{2\ell}')_\ell \cong  \mathbb{Z}/\ell^m\mathbb{Z}$. Now, if $h_{2,\ell}'=\ell^m$, then again by applying Proposition \ref{prop2.3} on $K_{2\ell}'/K_{2}'$, we get $\mathfrak{C}(K_2')_\ell \cong  \mathbb{Z}/\ell^m\mathbb{Z}$. If $h_{2,\ell}'=\ell^n$, where $n<m$, then according to Proposition \ref{prop2.2}(ii), $n=m-1$ and $\mathfrak{C}(K_2')_\ell \cong \mathbb{Z}/\ell^{m-1}\mathbb{Z}$. Hence, if $\ell$-class number of $K_{2\ell}'$ is $\ell^m$, then $\ell$-rank of $K_2'$ is $1$.

Let $h_{2\ell,\ell}'=\ell^n$, where $n<m$. Consider the extension $K_{2\ell^{a+1}}'/K_{2\ell^{a}}'$, for some $ 1\leq a < i$, such that $h_{2\ell^{a},\ell}' < h_{2\ell^{a+1},\ell}'$. If $\mathfrak{C}(K_{2\ell^a}')_\ell$ is cyclic, then $\mathfrak{C}(K_{2\ell^{a-1}}')_\ell$ is also cyclic (by Proposition \ref{prop2.2}(ii)). Repeating the same argument, we get that $\mathfrak{C}(K_{2\ell}')_\ell$ is also cyclic. Now, by analogous argument (as in the previous paragraph), either $h_{2,\ell}'=\ell^n \text{ or } \ell^{n-1}$ and in both the cases $\mathfrak{C}(K_2')_\ell$ is cyclic. Hence $\ell$-rank of $K_2'$ is $1$. 

{\bf{Case II}}: Let  $\mathfrak{C}(H)_\ell \cong \mathbb{Z}/\ell^{m-1}\mathbb{Z}\oplus \mathbb{Z}/\ell\mathbb{Z}$ and $h_{2\ell^i,\ell}'\neq \ell^m$. Since, $\ell \nmid [H:K_{2\ell^i}']$, therefore by Theorem \ref{thm2.2}, either $h_{2\ell^i,\ell}'= \ell^{m-1} \text{ or } \ell$. Thus $\mathfrak{C}(K_{2\ell^i}')_\ell$ is cyclic and as in Case I,
we get that $\mathfrak{C}(K_2')_\ell$ is cyclic and hence $\ell$-rank of $K_2'$ is $1$.


\section*{\textbf{Acknowledgements}} 
\noindent The authors are grateful to Prof. Dipendra Prasad for going through the manuscript carefully and giving useful remarks and suggestions. Authors are also thankful to Prof. Eleni Agathocleous and Dr. Azizul Hoque for many fruitful discussions and comments, which helped in improving our manuscript. The first two authors are supported by a research grant from the Department of Atomic Energy, Government of India. This work is partially supported by Infosys grant.

\end{document}